\documentclass[a4paper,11pt,twoside]{article}

\usepackage[body={175mm,250mm}]{geometry}
\usepackage{a1}
\usepackage[english]{babel}
\usepackage[latin1]{inputenc}
\usepackage{amsfonts,amssymb}
\usepackage{amsmath}
\usepackage{graphicx}	

\usepackage{makeidx}
\makeindex

\def\mapright#1#2#3{\smash{\mathop{\hbox to
#3{\rightarrowfill}}\limits^{#1}_{#2}}}
\def\mapleft#1#2#3{\smash{\mathop{\hbox to
#3{\leftarrowfill}}\limits^{#1}_{#2}}}
\def\mapright#1#2{\smash{\mathop{\hbox to
0.90cm{\rightarrowfill}}\limits^{#1}_{#2}}}
\def\mapleft#1#2{\smash{\mathop{\hbox to
0.90cm{\leftarrowfill}}\limits^{#1}_{#2}}}
\def\mapleftright#1#2{\smash{\mathop{\hbox to
0.80cm{\leftarrowfill \rightarrowfill}}\limits^{#1}_{#2}}}

\def\paf{\partial f}
\def\paS{\partial S}
\def\C{\mathbb C}

\def\Z{\mathbb Z}

\def\cP{\mathcal{P}}
\def\cR{\mathcal R}
\def\Rsq{\cR^{\sqrt{}}}

\title{A state-sum invariant of tangles in oriented
surfaces\footnote{2010 Mathematics Subject Classification: 57M27.}}
\author{Peter M.~Johnson \and S\'ostenes Lins}
\date{}

\begin{document}

\maketitle

\begin{abstract}
We exploit a recent insight of \cite{joli2012A}, which explains how to
obtain invariants of links in space, or tangles in surfaces, from
quantities invariant only under a restricted set of Reidemeister moves.
The main idea involves modifying diagrams to simplify their faces.
This will now be used to define new state-sum invariants based on
assigning symbols to faces, in a new way that avoids 
undesirable simplifications in the relations.
Any such invariant has several properties of interest,
among them functorial ones roughly like those of a TQFT:
pasting surfaces with compatible tangles along some of their boundary
components corresponds to obtaining the value of a matrix-valued invariant
by multiplying matrices for the individual pieces. 

As a concrete illustration of the ideas, simple assumptions yield what we
call the $u$-invariant.  For closed tangles (links in surfaces) it
takes values in $\Z[u]$, where $u$ is a primitive fifth root of unity.
It is well-adapted for computations, surprisingly strong for something
whose definition (with verification) is so easy, and has several
interesting properties.
\end{abstract}

\section{Introduction}

While this work is mainly concerned with invariants of links, it
has its inspiration in the 3-manifold invariants introduced in
two seminal articles: that by Turaev and Viro \cite{turaev1992state},
where symbols in $\{0,\ldots,r-1\}$ are assigned to the 2-cells
of a special spine of a 3-manifold, and that by Reshetikhin and Turaev 
\cite{reshetikhin1991invariants}, where such symbols are assigned to
the faces and component knots of a projection to $S^2$ of a framed link
in $S^3$ representing a 3-manifold by surgery instructions.
A state is such an assignment, subject to certain rules.
See also Lickorish \cite{lickorish1991three} and Kauffman-Lins
\cite{kauffman1994tlr} for connections of the Reshetikhin-Turaev
theory with the Temperley-Lieb algebra and the Jones polynomial.
Our motivating idea was to generalize and simplify the above constructions 
via an abstract algebraic approach, in a way roughly similar to the
treatment of ideal Turaev-Viro invariants by King \cite{king2007idealTV},
but with a more radical reduction of the machinery involved.
Euler characteristics of faces were relevant in \cite{turaev1992state}
and will reappear below in a related role.

What we now wish to present arose from that project.  It concerns a way to
produce invariants of tangles in oriented surfaces $S$, where the value of
each state of a link or tangle diagram $T$ is the product of variables that
record information about the state, at least on faces and around vertices. 
The value of a diagram is obtained by summing the values of all its states.
As usual, the notion of diagrams equivalent under moves leads to
state-sum invariants, in rings satisfying
relations obtained from pairs of diagrams.

To handle a problem related to a non-local property of Reidemeister moves of
type 2, we were led to re-examine the purely diagrammatic foundations of the
theory of links and tangles.  The conclusion, justified in our companion
article \cite{joli2012A}, is that to produce invariants it is enough to require
invariance under a restricted set of Reidemeister moves,
provided the tangle diagrams are first adjusted so as to be fine,
as defined in the next section.

The present article applies the main result of \cite{joli2012A} in order to
construct families of link and tangle invariants with a remarkable property,
much like one for topological quantum field theories:
calculations made locally, using tangles
in the pieces of any partition of the surface, can be merged (by multiplying
matrices), to produce the value of the invariant, without any reliance on 
global information.  Such computations are highly parallelizable.
In contrast, the Kauffman bracket seems to be inherently global, as its
calculation relies on obtaining the number of circles produced by each state.
However, some even more general invariants, such as those of Khovanov-Rozansky
homology, are amenable to local calculations, albeit difficult ones.
They use a canopolis formalism based on planar algebras,
so only plane projections of links are contemplated.
For details one can consult Webster \cite{webster2007} and the references therein.

A complete analysis of the invariants we consider was possible after an
especially simple abstraction from the case where the number $r$ of symbols
is 2.  Only two different non-trivial invariants arose.
One is known: it is essentially the state-sum invariant studied by Kauffman
in Part~II, Section~5 of \cite{kauffman1991knots}.
It computes nothing more than the absolute values of linking numbers of
unoriented links (in surfaces, if desired), and will not be discussed further.
The other, called the $u$-invariant, is new.  For links, it takes values
in $\Z[u]$, where $u$ is a primitive fifth root of unity.
Its definition is so easy that invariance can be checked by hand. 
We thought of the $u$-invariant as a toy test case until, after generating
tables, we saw that its power to discriminate is not much weaker than that
of the Kauffman bracket (Jones polynomial). 
Although extensive tests did not reveal any pair of links distinguished
by the $u$-invariant but not by the Jones polynomial, we were unable to
find any relation between values given by the two invariants.
The $u$-invariant is the more easily calculable one,
not just due to the locality property mentioned above, but because
its more restrictive rules mean that the number of admissible states
of link diagrams with $n$ crossings
($n+2$ faces if planar) tend to be only a small fraction of $2^n$.
A small table for knots, and related graphics, appear in the appendixes.
The last section develops some theory for the general invariants.

We are indebted to the Departamento de Matem\'atica, UFPE, Brazil and to the
Centro de Inform\'atica, UFPE, Brazil for financial support.
The second author is also supported by a research grant from CNPq/Brazil,
proc.~301233/2009-8.

\section{The basic topological objects}

Objects and maps are assumed throughout to be piecewise linear.
Let $S$ be a compact oriented surface of genus $g$ whose boundary $\paS$
has $c$ components, with orientation induced from that of $S$.
The Euler characteristic $\chi_S$ of $S$, usually defined from counts
made after cutting $S$ into simple pieces, is $2-2g-c$. 
Boundary components are topological circles, but our preference is to
call them {\em holes}, as $S$ can be obtained from a $g$-torus by removing
the interiors of $c$ mutually disjoint closed disks. 
A {\em tangle} $T$ in $S$ is a subset of $S$ consisting of a finite set
of unoriented curves, where each intersection is transverse and endowed
with under-over crossing information, such that each 
curve that is open (not closed) has endpoints in $\paS$,
and is otherwise disjoint from $\paS$.  
An $n$-tangle is a tangle with exactly $n$ of its curves open.
A {\em face} of a tangle is a connected component of $S\backslash T$.
For convenience the exposition will focus on framed tangles,
where instead of Reidemeister moves of type 1 one uses those of
type $1'$, also known as ribbon moves.  We will ignore 
other possible refinements such as those where 
link components are oriented or coloured in some way. 
\begin{figure}[!ht]
\begin{center}
\includegraphics[width=7.5cm]{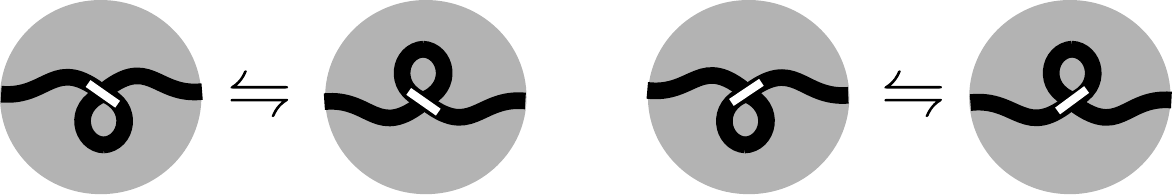}
\caption{\sf Ribbon moves (Reidemeister type $1'$).}
\label{fig:ribbonmoves}
\end{center}
\end{figure}
\begin{definition}
A tangle $T$ in $S$ is {\em fine} if each face $f$ is, topologically, an
open disk or an annulus (or cylinder) whose boundary $\paf$ in $S$
intersects $\paS$ in a connected set, which is a boundary component of $S$
if $f$ is an annulus.
\end{definition}
A {\sl diagram} is an isotopy class of tangles in a surface $S$.
We refer to tangles but implicitly work only up to isotopy.  Tangles that
are not already fine can be adjusted to make them so.  Some choice is
involved but, as explained in \cite{joli2012A}, this can be accounted for.
However, for invariants of the kind we consider, the requirement of
fineness is too stringent, as it is more convenient to work with tangles
satisfying only the following mild condition.
\begin{definition}
A tangle $T$ in $S$ is {\em well-placed} if, for each of its faces $f$, the
boundary $\paf$ intersects $\paS$ in a connected set.
\end{definition}

For a link in $S^2$, the removal of a point in the interior 
of an edge produces a {\em long link}: an infinite version of a 1-tangle
in the plane, having exactly two unbounded faces.
It consists of a long knot and possibly other components that are links. 
Long links in more general surfaces will not be contemplated here.
The invariants below do not exploit these topological distinctions, treating
indifferently links in $S^2$ and links or long links in the plane. 
Our main concern is to avoid tangles that are not well-placed
(adapting the definition when treating long links).
In particular, the only Reidemeister moves admitted are those
that take place between well-placed tangles. 
Whenever tangles arise in other constructions, they should be converted to
well-placed tangles.

Diagrams in surfaces will often be decorated by adding further structure. 
By an order we mean a linear (total) order, unless otherwise stated.
The orientation on $S$ induces a cyclic order on each hole of $S$.
Suppose two mutually disjoint and possibly empty sets of holes
are chosen, and each listed in a fixed order.  These holes are called
respectively the {\em upper} and {\em lower boundaries} of the diagram,
or {\em inputs} and {\em outputs}.
At each hole there is a cyclically ordered set, possibly empty, of
endpoints of curves in the tangle.
We force these orders to become linear by marking a starting point on
each hole, avoiding the tangle.
There is an obvious way to form a category,  where two isotopy classes of
tangles can be composed when outputs of the first are compatible with inputs
of the second.  In order to be able to fuse strands of tangles where a pair
of holes merges, aligned via the marked points, the holes must have
anti-isomorphic linear orders (each isomorphic to the reverse of the other).

One useful variant is the category whose morphisms consist of equivalence
classes of well-placed tangles, with equivalence given via Reidemeister
moves between diagrams.  Such definitions ensure good control over the
Euler characteristics of faces obtained when pasting together tangles
in surfaces.
Yet another category is obtained by pasting classes of long links.

\begin{figure}[!ht]
\begin{center}
\includegraphics[width=12cm]{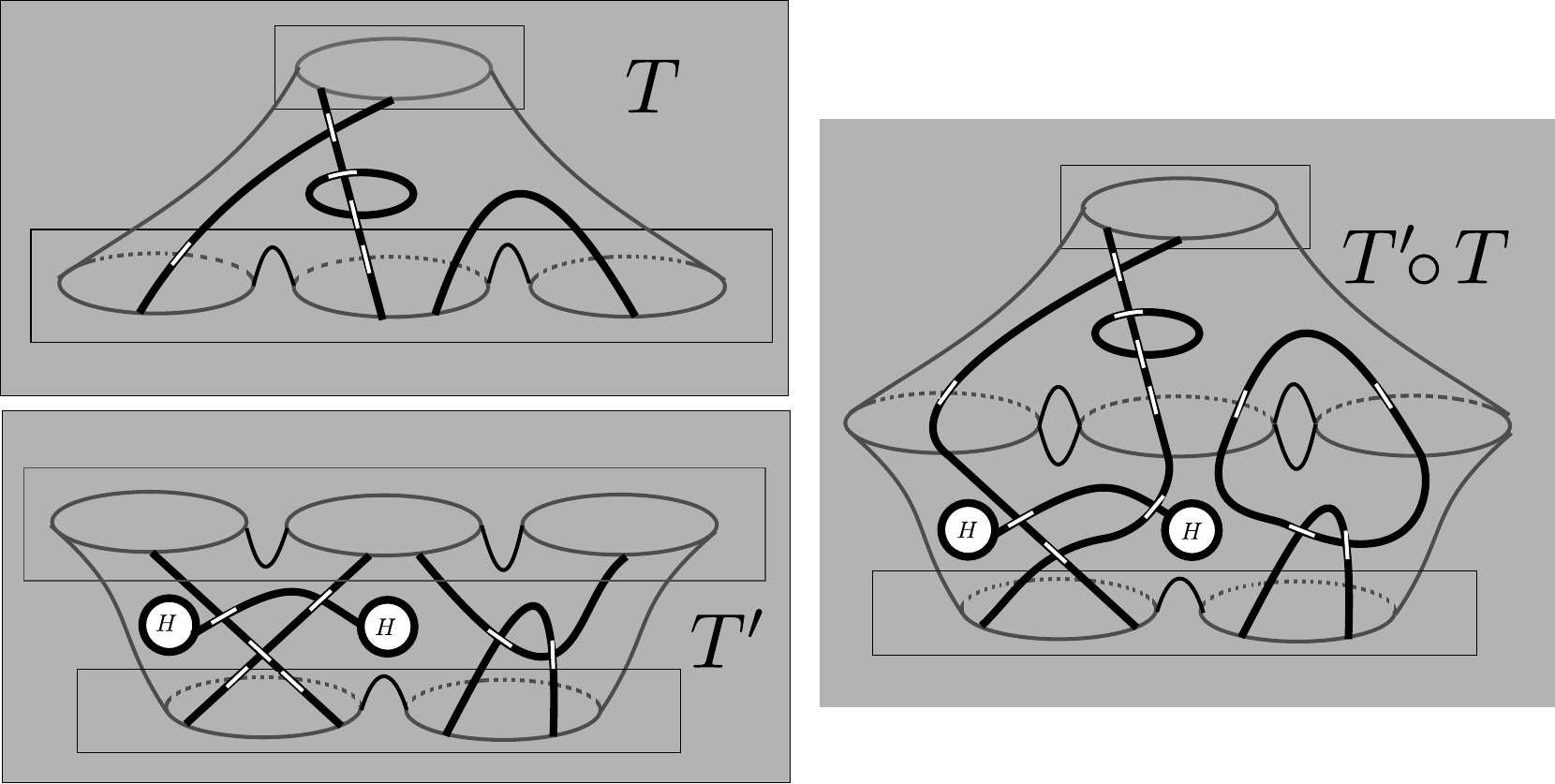} \\
\caption{\sf Composition of tangles (not well-placed) in surfaces with holes.}
\label{fig:generalcomposition}
\end{center}
\end{figure}

More importantly, the pasting procedure can be reversed, say starting with a
tangle in a surface $S$, not assumed to be connected, and a cut that cleaves $S$
into two specified pieces $S_1$ and $S_2$  by removing closed curves disjoint from
each other and from pre-existing holes, having only transverse intersections
with the tangle.  Each curve in a cut is required to be a boundary component
of both $S_1$ and $S_2$.  A tangle $T$ in $S$ gives tangles in $S_1$ and $S_2$
that can be replaced by equivalent well-placed tangles $T_1$ and $T_2$.

One can also cleave surfaces repeatedly, producing (after adjustment)
well-placed tangles $T_i$ in surfaces $S_i$, ordered linearly and compatibly.
When pasted together they produce a tangle in $S$ that is clearly well-placed.
A noteworthy class of examples is given by rational links in the plane, or in
$S^2$.  By removing closed curves that intersect the link in exactly four
points, these can be cut into simple pieces.
One could if desired work within an even more general setting for cutting and
pasting, much like that for the planar algebras of Jones \cite{jones1999planar},
but with surfaces that need not be planar.

\begin{figure}[!ht]
\begin{center}
\includegraphics[width=4.5cm]{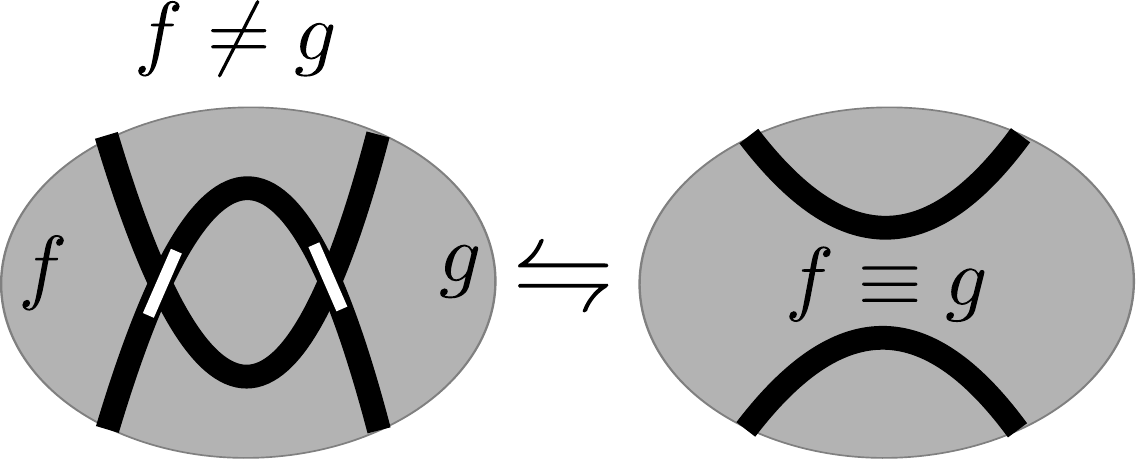}
\caption{\sf Admissible move of type 2 (non-local) for fine tangles.}
\label{fig:unrestricted2}
\end{center}
\end{figure}

\begin{figure}[!ht]
\begin{center}
\includegraphics[width=6.8cm]{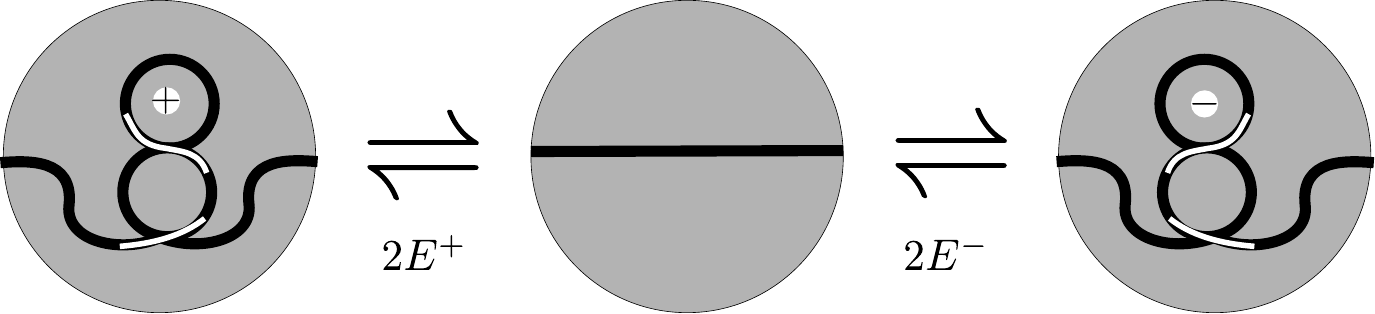} \hspace{6mm}
\includegraphics[width=7.8cm]{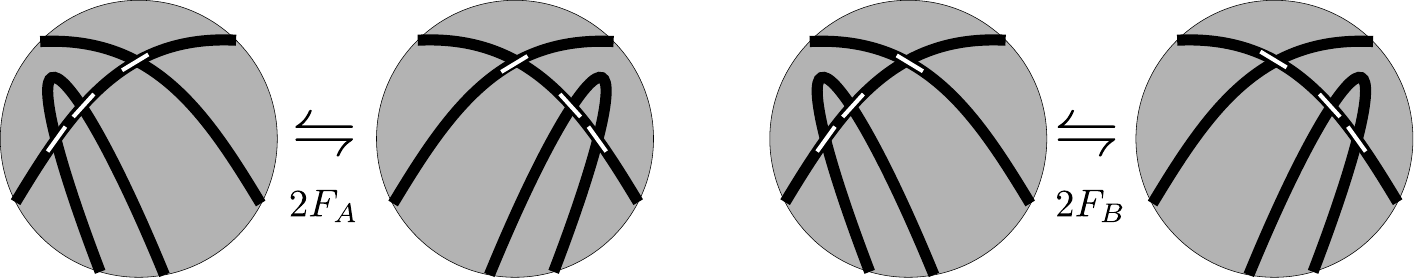} \\
\caption{\sf Local moves that can replace admissible moves of type 2.} 
\label{fig:eigthfingermoves}
\end{center}
\end{figure}

In the context of fine tangles, a Reidemeister move of type 2
is admissible only when the two faces $f$ and $g$ are distinct,
as shown in  Fig.~\ref{fig:unrestricted2},
and at least one of these has boundary disjoint from $\paS$.
Such conditions are useful in practice even though they are non-local:
unlike Reidemeister moves of type 1 or 3, these ones
cannot be verified merely by examining the given parts of the diagrams.  
The four kinds of move shown in Fig.~\ref{fig:eigthfingermoves}, which are 
clearly local, form an adequate substitute for admissible
moves of type 2, as moves between fine tangle diagrams.
This is proved in Theorem~2.2 of \cite{joli2012A}.
However, to justify results for the wider class of well-placed tangles, it
will be necessary to examine invariance under more general moves of type 2.

Finally, to see a useful composition of moves, consider a fine tangle
having only one pair of external connections (a $1$-tangle),
as illustrated in Fig.~\ref{fig:tanglepassingcrossing},
where the letters in the faces can for now be ignored.
It is not hard to see that, via sequences of admissible moves, $T$ can
be moved through a crossing of either type (i.e., regardless of how
the strands cross), provided $T$ lies within a topological disk.
Non-planar tangles do not have this property.

\begin{figure}[!ht]
\begin{center}
\includegraphics[width=5cm]{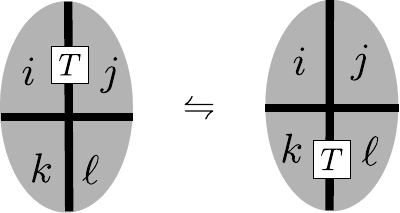}
\caption{\sf A planar 1-tangle can pass through a crossing.}
\label{fig:tanglepassingcrossing}
\end{center}
\end{figure}

\section{Facial state-sum evaluations of tangles}
\label{sec:statesum}

The first ingredient in the present approach to constructing state-sum
invariants is an arbitrary finite set whose elements we call symbols,
or coloured dots. 
A {\em state} of a tangle diagram is a function that assigns a symbol to
each face.  
Something more general is needed, so by a {\em partial state} we mean a
relation that assigns zero or more symbols to each face or, informally,
places coloured dots in some faces.
These will be assigned values in some fixed ring $\cR$ (when the tangle
is a link) or in a certain overring $\Rsq$, described below.
A partial state is inconsistent, and is assigned value zero,
if it has a face with dots of different colours.  Multiple copies
of dots (same colour, in the same face) have no effect on values. 
Any assignment of a values to states can be
extended to an assignment on partial states.  One could just sum over all
states that extend a given partial state, but we find it more useful to
follow the convention of not counting the contribution (defined below)
from faces to which the partial state has already assigned a symbol.

The general approach adopted for assigning a value to each state on
the faces of a tangle diagram is to assign the product of variables that
record local information about the state in various parts of the diagram. 
In what follows we proceed very simply, with nothing more than a variable at
each face and at each crossing, and a notion of forbidding states where
certain symbols lie in adjacent faces. 
Later we will study the relations necessary to ensure invariance of values
under the relevant Reidemeister moves.

For a fine tangle in $S$, each face $f$ is either an {\em internal face}
(its boundary $\paf$ is disjoint from $\paS$) or a {\em boundary face}
($\paf$ meets $\paS$ in a connected set, part or all of the boundary of
a unique hole of $S$).  Internal faces are topological disks, while
boundary faces can be disks or annuli.
Associate with each symbol $i$ an element $x_i$ of $\cR$ that is formally
the square of an element $\sqrt{x_i}$, say in an overring $\Rsq$ of $R$.
The $x_i$ are also called facial or $1j$ variables.
For fine tangles, the {\em contribution} of an internal face having
symbol $i$ is defined to be $x_i$, but for boundary faces it is
$\sqrt{x_i}$ for a disk, 1 for an annulus.
As will be justified by Theorem~\ref{theo:invert}, no generality is lost
by assuming that the $x_i$ are invertible elements of $\cR$.

In theory it suffices to assign a value to each tangle only via
prior conversion into a fine tangle, in situations where this does not
depend on the choices made, but it is more efficient to work directly with
the much wider class of well-placed tangles.  
One can derive formulas appropriate for the more general faces
that appear, but we give these now and will verify them only later,
in Theorem~\ref{theo:newvalue}.
Just as $S$ has an Euler characteristic $\chi_S$, so does each face $f$.
In a state where $f$ has symbol $i$, the contribution at $f$ is now
defined to be $x_i^{\chi_f}$, unless the boundary of $f$ intersects
$\paS$ in part but not all of some component of $\paS$.  In that case,
the previous assignment must be divided by $\sqrt{x_i}$.
This is consistent with the original formulas for $f$ a disk (one boundary,
$\chi_f = 1$) or an annulus $(\chi_f = 0)$.

At each crossing, viewed so that the overpass is the northeast-southwest
strand, and with adjacent faces endowed with symbols $i,j,k,l$,
in anticlockwise order starting from the east, associate a
so-called $4j$-variable $x_{ijkl}$ with that crossing. 
Since tangles are assumed to be unoriented, symmetry forces
identities $x_{klij} = x_{ijkl}$.

At edges it would be natural to define $2j$-variables $x_{ij}$ from
pairs $\{i,j\}$ of symbols.  It appears at first that, at least when
$\paS$ is empty, such variables could be subsumed into the $4j$-variables
at crossings by an argument using half-edges and square roots
of the $x_{ij}$, but that idea fails to deal properly with the trivial
unknot.  Still, by more careful arguments roughly like those used later in
Theorem~\ref{theo:doubleinvariance}, and via Assumption~\ref{ass:ring} below, 
one could reduce to the situation that each $x_{ij}$ has value 0 or 1.  
A more refined approach (among several viable possibilities) would be to
introduce some kind of $6j$-variables that record the six symbols in faces
around the two endpoints of each edge, as in \cite{king2007idealTV}.
There the situation becomes richer but much more difficult to analyse
algebraically.
The position we adopt is to avoid all such variables, giving instead a list
of the pairs $\{i,j\}$ of
symbols such that any state having adjacent faces marked with $i$ and $j$
is assigned value 0.  Such pairs and states are said to be {\em forbidden}
and are implicitly excluded from lists.
All $4j$ (crossing) variables that involve
forbidden pairs can and should be set to zero.

Now consider a tangle $T$ in a surface decorated as above with upper and
lower boundaries.
Let $\upsilon$ and $\lambda$ be partial states that assign symbols
to those faces of $T$ that meet the upper (resp.~lower) boundary.
Recall that no face can meet both boundaries.
Let $\Upsilon(T)$ and $\Lambda(T)$ denote lists, ordered in some
canonical way, of all possibilities for $\upsilon$ and $\lambda$,
respectively, excluding cases that are inconsistent or forbidden.
Thus, for example, the list corresponding to an empty boundary
consists of a single empty substate.
For simplicity, the next formula is given only in the case where $T$ is
fine and $C$ consists of two symbols called white and black,
with corresponding facial variables $x$ and $y$.
The {\em evaluation} of $T$ is the matrix $[T]$ indexed by pairs of substates
in $\Lambda(T) \times \Upsilon(T)$ whose $(\lambda, \upsilon)$-entry
is the sum $$ \sum_\sigma \left\{ x^{w(\sigma)} y^{b(\sigma)}
\sqrt{x}^{w'(\sigma)} \sqrt{y}^{b'(\sigma)} \xi_\sigma \ | \ \sigma
\text{\ is a state of $T$ that extends both\ }\lambda \text{\ and\ } \upsilon 
\ \right\} \in \Rsq,$$
where $w(\sigma)$ and $b(\sigma)$ are, respectively, the number of
internal faces whose symbol in state $\sigma$ is white or black.
Similarly, $w'(\sigma)$ and $b'(\sigma)$ count colours for those
boundary faces that are topological disks, while $\xi_\sigma$
denotes the product of the associated $4j$-variables.
An analogous formula defines evaluations for well-placed tangles and more
general sets of symbols, in a way tailored to make the following theorem hold.

\begin{theorem}
Suppose $T$ and $T'$ are well-placed tangles in decorated surfaces $S$ and $S'$
such that the lower boundary of $S$ (an ordered set of boundary components)
matches the upper boundary of $S'$.
Then the evaluation $[T'\circ T]$ of the composite tangle $T'\circ T$ is the
matrix product $[T']\,[T]$.
\end{theorem}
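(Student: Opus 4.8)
The plan is to produce a contribution-preserving bijection between the states summed over on the two sides and to reduce the whole statement to one local identity at the cut. First I would check that the matrix product is even defined: the columns of $[T']$ and the rows of $[T]$ are both indexed by symbol assignments to the faces meeting the matched boundary, equivalently to the arcs (the ``gaps'') between successive strand-feet on the glued holes. Hence $\Upsilon(T')$ and $\Lambda(T)$ are canonically one and the same index set, say $M$, of consistent and non-forbidden boundary substates $\mu$, and the product $([T']\,[T])_{\lambda',\upsilon}=\sum_{\mu\in M}[T']_{\lambda',\mu}\,[T]_{\mu,\upsilon}$ carries exactly the row and column indices of $[T'\circ T]$.

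Next I would analyse how faces behave under the paste, using well-placedness. Since $\paf$ meets $\paS$ in a connected set, and no face meets both the upper and lower boundary, each face of $T$ that touches a matched hole does so along a single gap, and likewise for $T'$; gluing the holes (aligned via their marked points, with anti-isomorphic orders) identifies each such gap-arc of $S$ with the corresponding one of $S'$ and fuses precisely one face $f$ of $T$ with precisely one face $f'$ of $T'$ into a single face $F$ of the composite, which meets no boundary of the glued surface and is therefore internal. No crossing and no new face-adjacency is created by the gluing, since the strand-feet join into through-strands rather than crossings. Consequently a state $\sigma$ of $T$ and a state $\sigma'$ of $T'$ paste to a state $\tau$ of $T'\circ T$ exactly when they agree on $M$; this, together with restriction of $\tau$ back to $S$ and $S'$, gives the bijection $\tau\leftrightarrow(\sigma,\sigma',\mu)$, under which forbidden or inconsistent $\tau$ correspond exactly to forbidden or inconsistent $(\sigma,\sigma')$.

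The heart of the proof is then to show that contributions multiply, $\text{contribution}(\tau)=\text{contribution}(\sigma)\cdot\text{contribution}(\sigma')$. The $4j$-variables cause no trouble: every crossing lies entirely in $S$ or in $S'$, so $\xi_\tau=\xi_\sigma\,\xi_{\sigma'}$, and faces away from the cut contribute identically on both sides. It remains to treat a fused face $F$, carrying a symbol $c$ with facial variable $x_c$, glued from $f$ and $f'$ along the identified arc-or-circle $\alpha$. Euler characteristic is additive up to the overlap, $\chi_F=\chi_f+\chi_{f'}-\chi_\alpha$, and by the well-placed contribution rule (value $x_c^{\chi}$, divided by $\sqrt{x_c}$ exactly when the face meets $\paS$ in a proper arc, as stated before Theorem~\ref{theo:newvalue}) the faces $f$ and $f'$ each lose a factor $\sqrt{x_c}$ precisely when $\alpha$ is a proper arc. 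Their two boundary-divisions therefore amount to $x_c^{-\chi_\alpha}$ in both the arc case ($\chi_\alpha=1$) and the whole-hole case ($\chi_\alpha=0$), so
$$ x_c^{\chi_f}\cdot x_c^{\chi_{f'}}\cdot x_c^{-\chi_\alpha}=x_c^{\chi_f+\chi_{f'}-\chi_\alpha}=x_c^{\chi_F}, $$
which is exactly the contribution of the now-internal face $F$. Summing the factored contributions over the bijection gives $[T'\circ T]_{\lambda',\upsilon}=\sum_{\mu}[T']_{\lambda',\mu}[T]_{\mu,\upsilon}$, as required. I expect this last bookkeeping to be the main obstacle: the two square-root weights on the boundary faces of the pieces must conspire to reconstitute the single integral weight of the fused face, which is precisely the reason boundary-disk faces were assigned square roots in the first place.
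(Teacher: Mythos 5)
Your overall strategy is the same as the paper's: group the states of $T'\circ T$ by their restrictions to the cut, note that crossings and faces away from the cut contribute identically, and reduce everything to a local identity asserting that the contribution of a fused face is the product of the contributions of its pieces. Your unified form of that identity, $x_c^{\chi_f}\,x_c^{\chi_{f'}}\,x_c^{-\chi_\alpha}=x_c^{\chi_F}$, is in fact tidier than the paper's verification, which checks only the two model cases (two disks glued along an arc, two annuli glued along a full circle) and then appeals to Euler characteristics for the rest.

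There is, however, a genuine gap in your reduction: the claim that well-placedness forces each face touching a matched hole to meet it along a single gap, so that exactly one face of $T$ fuses with exactly one face of $T'$. Connectedness of $\partial f\cap\partial S$ does not imply this; the connected set may contain strand endpoints in its interior, or be the whole hole. Concretely, let $S$ be a torus with one hole $H$ carrying two endpoints $A,B$, and let $T$ be a single non-separating arc from $A$ to $B$: then $S\setminus T$ is one (annular) face whose boundary meets $\partial S$ in all of $H$, i.e.\ in both gaps, yet $T$ is well-placed (indeed fine). Glue this to a disk $S'$ containing a diameter arc $T'$. The composite is an essential curve on the closed torus, whose single internal annular face contributes $x^{\chi_F}=x^0=1$, whereas the pieces contribute $x^{0}\cdot\sqrt{x}\cdot\sqrt{x}=x$; moreover $\Lambda(T)$ and $\Upsilon(T')$ then have different sizes (one face versus two), so the matrix product is not even defined under your identification. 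Thus your one-face-to-one-face bookkeeping, and with it the square-root cancellation, breaks down as soon as a face meets the matched hole in more than one gap, which can happen once the pieces have positive genus. Your argument is sound for planar pieces, where a face cannot abut both sides of a strand endpoint, and, in fairness, the paper's own proof silently assumes the same one-to-one fusion and is exposed to the same objection; repairing either argument requires restricting which faces may meet the cut, or charging a factor $1/\sqrt{x_i}$ per boundary arc (gap) rather than per face.
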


\begin{proof}
The matrix has been defined so that the given product relates to composition
of morphisms in the category, where the upper boundary of $T'$ in a surface
$S'$ and the lower boundary of $T$ in $S$ are pasted together, as exemplified
in Fig.~\ref{fig:generalcomposition}.  The matrix product corresponds
to grouping terms in state sums for $T'\circ T$ by the restrictions of
states to the set of faces obtained by pasting together a face $f$ in $S$
and a face $f'$ in $S'$, both having the same symbol $i$. 

The only issue is to see that value of a new face obtained is the product
of the values for the corresponding $f$ and $f'$.  The simplest cases
involve pasting two disks at part of a hole, where we verify
$\sqrt{x_i}.\sqrt{x_i} = x_i$, or pasting two annuli along a full
boundary of each, where we verify $1.1 = 1$. 
More generally, faces can have higher genus (which adds under connected sum)
or there can be additional boundary components disjoint from $\paS$.  All
this is reflected in the use of Euler characteristics to assign values to
faces $f$, and the full result follows easily from the two cases treated.
\end{proof}

A noteworthy special case is when a composition $T$ of two or more
tangles lies in a surface $S$ without boundary.  Thus $T$ is a link (it
consists entirely of closed curves), and $[T]$ is a $1 \times 1$ matrix,
although we usually abuse notation by calling its entry $[T]$. 
This lies in $\cR$, as no square roots remain.

\section{Remarks on computing general ring-valued invariants}

In many cases where the rules for state assignment permit few choices,
corresponding to small values of $r$ in state-sum invariants
similar to ones mentioned above,
work has been carried out using algorithmic methods involving Groebner
bases, where the invariants are the normal forms of polynomials
modulo the ideal of relations.
Harder analyses were performed using Singular \cite{greuel2011singular},
while tables of state-sum invariants were generated by programs written
in Mathematica \cite{wolfram2012mathematica}.
Algorithmic aspects will not be discussed here.  However,
before examining the first interesting example that emerged from
our systematic studies of such invariants, we briefly highlight the role
played by well-known fundamental results on rings and fields.

For ring-valued state-sum invariants in general, the most appropriate ring
is a polynomial ring $\cR$ over $\Z$ in the relevant variables, modulo the
smallest possible ideal $I$ of relations that force the desired invariance.
As $\cR$ is Noetherian, $I$ is a finite intersection of primary ideals $I_i$. 
Jointly, the invariants from the $I_i$ have the same power to discriminate
links as does the general invariant, since $\cR/I$ injects into the direct sum
of the $\cR/I_i$.   Whenever $I_i$ is prime, this gives an invariant with
values in a field (the quotient field of $\cR/I_i$).
For an arbitrary primary ideal $I_i$, a similar idea yields an invariant that
can be considered to lie in a ring whose quotient, modulo the nilradical (some
power of which is zero), is a field.  One could just use fields as, 
when nilpotents are factored out, the discriminating power of invariants
(measured on some set of links) rarely weakens. 
Some loss has been observed only for a few invariants that were already weak.
There is, however, a motive for considering nilpotent elements.  One can at times
produce invariants that are not uselessly weak and can be computed relatively
quickly, by adding extra relations to force values of selected variables to be
nilpotent.
In any case, the observations above justify the following standing assumption
on the rings $\cR$ (and also $\Rsq$) in which the invariants take values.

\begin{assumption}
\label{ass:ring}
For some integer $N$, the divisors of zero in $\cR$ are the elements $r$
with $r^N = 0$.
\end{assumption}

Whenever an ideal $I_i$ yields a weak invariant (tested against small tables of
knots and links), it is discarded.  Then, unless $I_i$ is too complicated
to handle well, it is worth finding some way to present values of the invariant
in a better way than as normal forms relative to some Groebner basis.
When the ideal is prime, we prefer to express each value in the associated
quotient field in a canonical way as a quotient of polynomials.
This can in theory be done, as any field of characteristic 0 is a 1-generated
finite extension of a purely transcendent subfield, where free generators
can be chosen from among the original variables.
More generally, primary ideals $I_i$ can also be handled.
Work carried out with Singular produced, in selected cases,
explicit formulas, allowing invariants to be written in convenient forms.
Our interest is in invariants with fairly simple formulas and good
discriminating power.

\section{A simple example: the $u$-invariant}

Recall the general setup for states and partial states, where 
coloured dots (symbols) are placed in faces.
This section is devoted to one simple example of an invariant that uses
only two symbols, called black and white, subject to one rule:
states are forbidden to have adjacent faces receiving a black dot. 
More precisely, if a forbidden state somehow arises it is given value 0,
just as for inconsistent partial states where some face contains dots of
different colours.
Our rule for forbidden states was not imposed arbitrarily, but emerged from
a complete analysis of the case $r=2$ (two symbols), where an ideal of
relations was decomposed as an intersection of primary ideals.

We consider only unoriented tangles, leaving more complex cases for articles
in preparation.  Thus, given a state of a diagram, at each vertex (crossing)
there are at most five possible configurations with a dot in each of the four
(possibly not different) adjacent faces, with associated 
$4j$-variables $\{z,a,b,e,j\}$ as shown in Fig.~\ref{fig:vars}.
There is a symmetry under a $\pi$-rotation, so $d=a$ and $h=b$.
Each letter corresponds to a binary number obtained by drawing the crossing
with the overpass from northeast to southwest, and reading the dots
anticlockwise starting from the east, using 0 for a white dot and 1 for a black dot.
\begin{figure}[ht]
\begin{center}
\includegraphics[width=15cm]{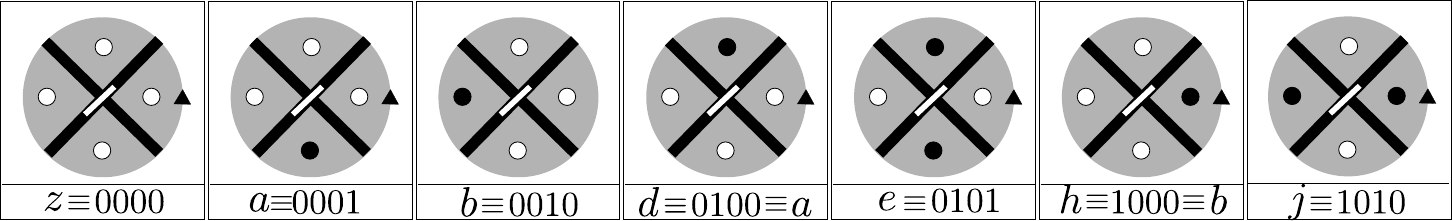} \\
\caption{\sf Scheme to associate variables to crossings}
\label{fig:vars}
\end{center}
\end{figure}

In addition to these five $4j$-variables we use two facial or $1j$-variables:
an $x$ in each white face and a $y$ in each black face, provided the faces are
disks with boundary disjoint from $\paS$.  As mentioned before, and proved
in the general setting of Theorem~\ref{theo:newvalue}, there are related
formulas for other kinds of faces.

When using only the theory of \cite{joli2012A}, the value of a tangle
diagram is calculated only after an adjustment that produces a fine tangle.
To obtain an invariant of framed tangles one need only
verify invariance under Reidemeister moves of types $1'$ (ribbon move), 
2 (restricted) and 3.
The restriction is that the number of faces must change by two under the
move.  In particular, moves that disconnect a diagram are not allowed.

Fig.~\ref{fig:7eqsR2uinv} shows all states, excluding forbidden ones, that
can be assigned to faces in part of a tangle where a Reidemeister move of
type 2 is performed.  In the digon, a dot which is half white and half black
indicates a superposition of two states.
Cases labelled 2 and 8 have rightmost diagrams with inconsistent substates,
so relations are imposed to force the left sides to have value 0. 
As explained below, evaluations here must be performed in a way that
departs from the usual convention for diagrams showing partial states.
Differences produce the following set of six polynomial
relations that must be satisfied in the ring:
$$\cP_2 = \left\{ z^2 x^2 + a b x y - 1,\ a b x^2 - 1, \ z b x + a j y,
\ e j x^2 - 1, \ z a x + b e y, \  a b x y + e j y^2 - 1\right\}.$$
The diagrams form parts of a larger whole where an admissible move takes
place between two fine tangles on a surface.   Three different faces on
the left, one a fully visible digon, merge to form one face on the right.
There may, however, be other coincidences among the four boundary faces
in the leftmost diagrams, so possible contributions of the upper and
lower faces to the evaluation should be ignored for the purposes of
obtaining relations. 
One might expect a common factor of a face variable ($x$ or $y$) in the
relations obtained by forcing pairs of consistent state diagrams to have
equal values, but in fact these do not appear when one of the faces that
fuses is an annulus.  By fineness at least one of the leftmost and
rightmost faces is an internal face, and one sees from this that the
relations already given are sufficient to handle all cases.
We will later see a proof in a general setting that face variables are
invertible, so can be cancelled from relations.

\begin{figure}[!ht]
\begin{center}
\includegraphics[width=14cm]{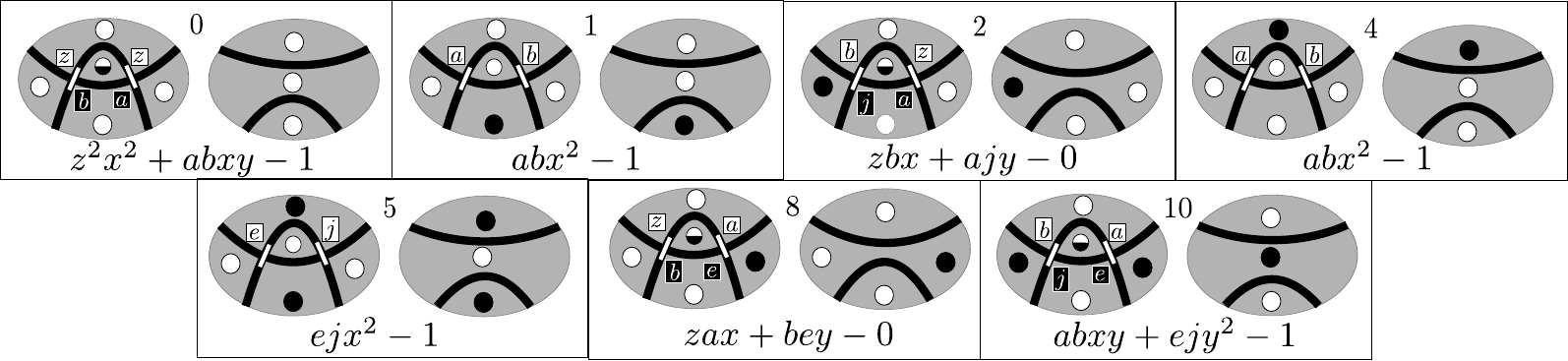} \\
\caption{\sf Relations inducing invariance of tangle evaluation under
admissible non-local moves of type 2.}
\label{fig:7eqsR2uinv}
\end{center}
\end{figure}

\begin{lemma}
Suppose the relations in $\cP_2$ hold for certain elements of a ring $\cR$.
Whenever $T$ and $T'$ are fine tangles differing only by an  
admissible Reidemeister move of type 2, their evaluations in $\Rsq$ are equal.
The same is true for partial states of these tangles, where regions assigned
symbols are not involved in the move.
\end{lemma}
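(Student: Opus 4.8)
The goal is to verify that a Reidemeister move of type 2 between two fine tangles $T$ and $T'$ preserves the evaluation in $\Rsq$, given that the six relations in $\cP_2$ hold. The strategy is to reduce the global statement to a finite, purely local check at the site of the move. I would first observe that a type 2 move affects only a bounded portion of the diagram: two faces $f$ and $g$ merge (together with a digon) into a single face, and the crossings involved are exactly the two created or destroyed by the move. Outside this region, every face, every crossing variable, and every symbol is untouched, so those contributions appear as a common multiplicative factor in the state sums for both $T$ and $T'$. The evaluation is a sum over all states, so I would partition the states of $T$ (resp.~$T'$) according to their restriction to the faces \emph{outside} the affected region. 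Since this outside restriction determines a fixed multiplicative factor, it suffices to show that, for each such fixed outside-restriction, the sum of contributions from states that agree with it is the same for $T$ and $T'$.

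\textbf{Reducing to the local relations.} Having localized, the next step is to enumerate, for each admissible assignment of symbols to the boundary faces surrounding the move site, the contributions coming from the interior faces and the $4j$-variables at the two crossings. This is precisely what Fig.~\ref{fig:7eqsR2uinv} records: the states labelled $1$ through (at least) $8$, each a legal placement of black/white dots in the faces abutting the move, with forbidden configurations (adjacent black dots) already discarded. For each boundary configuration, the left-hand diagram (with its digon and two crossings, summing over the two interior states of the digon) must evaluate to the same thing as the right-hand diagram (with the merged face and no crossings). Matching these pairs produces exactly the six polynomials in $\cP_2$: for instance $z^2x^2 + abxy - 1$ comes from a configuration where summing the two digon states against the $z$- and $a,b$-crossing variables, times the interior face contributions, must equal the single contribution $1$ of the merged annular/internal face after the move. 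The cases labelled $2$ and $8$, whose right-hand substates are inconsistent, force the corresponding left sides to vanish, giving the relations $abx^2-1$ and $ejx^2-1$ (after the noted cancellation of a face variable). So the content of this step is to read each relation in $\cP_2$ off the corresponding pair in the figure and confirm that the full list of boundary configurations is covered.

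\textbf{The main obstacle: the evaluation convention and the annulus subtlety.} The delicate point, flagged in the paragraph before the lemma, is that the evaluation here \emph{departs from the usual convention} for partial-state diagrams: because the upper and lower boundary faces in the leftmost diagrams may coincide with one another (there can be ``other coincidences among the four boundary faces''), their contributions must be \emph{excluded} when extracting relations, and one works only with the interior faces and the $4j$-variables. I would therefore be careful to suppress the boundary-face variables and check that the six relations as written are genuinely what is forced, not their multiples by some $x$ or $y$. The resolution uses fineness: when one of the merging faces is an annulus, the expected common factor of a face variable does \emph{not} appear, and fineness guarantees that at least one of the leftmost and rightmost faces is an internal face. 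This is exactly where the hypothesis that $T,T'$ are \emph{fine} (not merely well-placed) is used, and it is the step I expect to require the most care—ensuring that every combinatorial case (disk--disk, disk--annulus, annulus--disk merges) is genuinely subsumed by the six relations, with no missing or spurious face-variable factor.

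\textbf{The partial-state extension.} Finally, for the last sentence of the lemma—the statement for partial states whose marked regions lie away from the move—the same localization applies verbatim: the faces carrying pre-assigned symbols are, by hypothesis, not among those altered by the move, so their (omitted) contributions factor out identically on both sides. Grouping states by their values on the unmarked faces and invoking the equality already established for full states at each group then gives the partial-state equality with no further work, completing the proof.
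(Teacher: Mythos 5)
Your overall strategy coincides with the paper's proof: localize the move, group the states of the digon-bearing diagram according to the corresponding state (when it exists) of the other diagram, and observe that each within-group difference of evaluations is a multiple of a polynomial in $\cP_2$; the partial-state clause then follows from the same grouping, since faces with preassigned symbols are untouched by the move. Your handling of the evaluation convention (ignoring the upper and lower boundary faces) and of the annulus subtlety via fineness likewise mirrors the discussion that precedes the lemma in the paper.

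There is, however, a concrete error in the step where you claim to read the relations off Fig.~\ref{fig:7eqsR2uinv}. You assert that cases 2 and 8, whose rightmost substates are inconsistent, \emph{force the left sides to vanish, giving the relations} $abx^2-1$ \emph{and} $ejx^2-1$. This is internally inconsistent: a state sum is a sum of monomials with coefficient $+1$, so a left-hand diagram forced to have value $0$ produces a polynomial relation with no constant term. A relation of the form $P-1$ can never express vanishing; it expresses that a left side equals $1$ (the normalized contribution of the merged face on the right). The vanishing cases 2 and 8 are precisely the source of the two homogeneous relations $zbx+ajy$ and $zax+bey$: they occur when the two faces that would fuse with the digon carry different symbols, so that no consistent state of the rightmost diagram corresponds to them. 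The relations $abx^2-1$ and $ejx^2-1$ arise instead from consistent configurations in which one, respectively both, of the faces edge-adjacent to the digon are black, forcing the digon to be white and leaving a single product of crossing variables on the left. Since the substantive content of the lemma is exactly this correspondence between local boundary configurations and the six polynomials of $\cP_2$, the misattribution means that the completeness check you describe has not actually been carried out correctly, even though the surrounding logical scaffolding (localization, grouping, and the extension to partial states) is sound and is the same as the paper's.
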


\begin{proof}
In state sums for the leftmost diagram, one groups 
states that agree, except possibly on the digon for the move, with
the corresponding state (if it exists) in the rightmost diagram.
After evaluating, say ignoring contributions from faces with
preassigned symbols, the differences within each group are clearly 
multiples of polynomials in $\cP_2$.
\end{proof}

\begin{figure}[ht]
\begin{center}
\includegraphics[width=14cm]{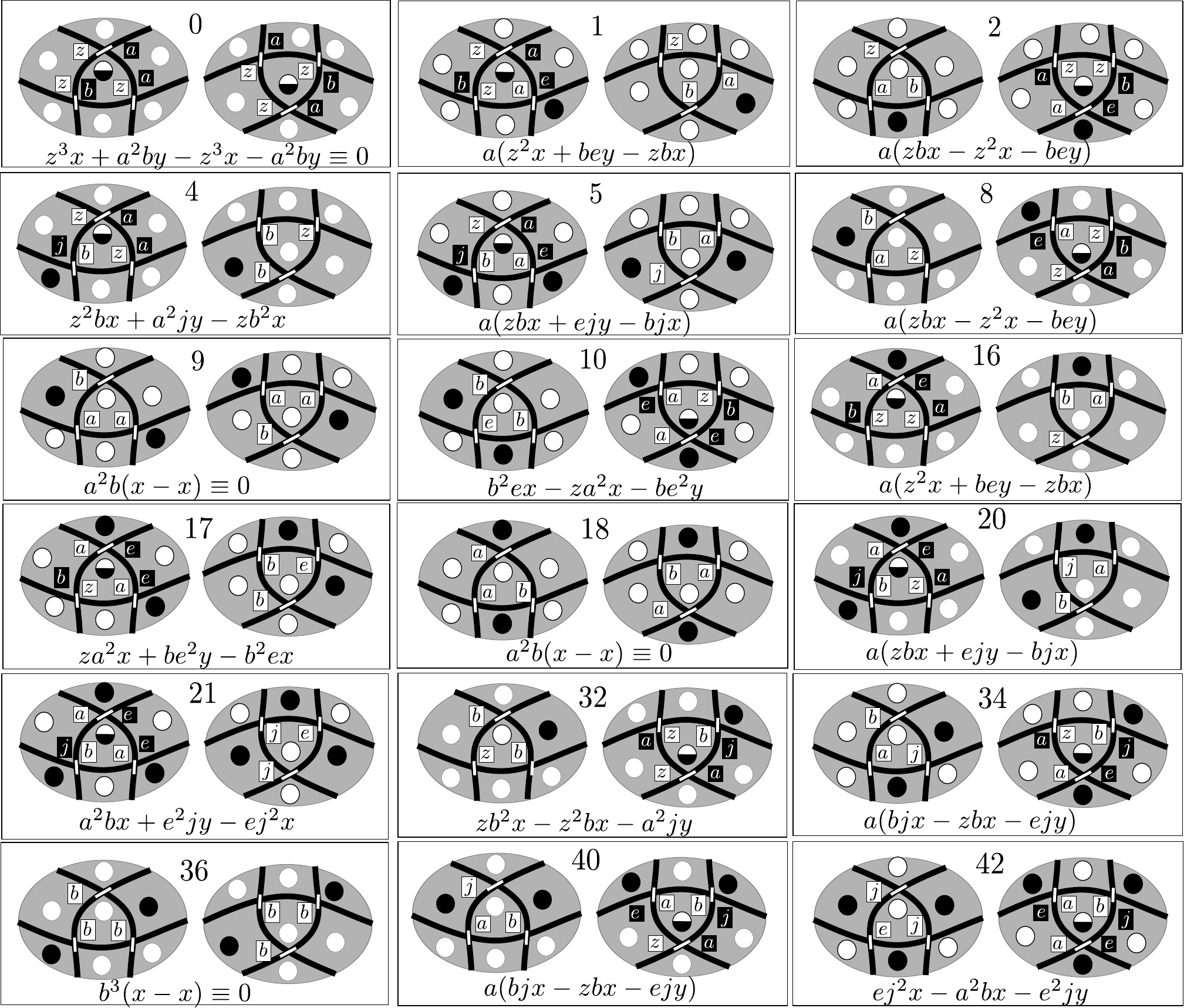} \\
\caption{\sf Polynomial relations for invariance of tangle evaluation
under Reidemeister move 3.}
\label{fig:18eqsR3uinv}
\end{center}
\end{figure}

The Reidemeister move 3 is easy to handle, as non-local features are absent.
Fig.~\ref{fig:18eqsR3uinv} shows 18
pairs of tangles with a complete assignment of states, 
possibly forming parts of larger diagrams.
Differences of values yield polynomial relations
that clearly suffice to guarantee invariance.
Cases 0, 9, 18, 36 give zero, while the others, up to
sign, give a set $\cP_3$ of five distinct relations:
\begin{center}
$\begin{array}{rrrrrr} 
1,2,8,16) & a( z^2 x + b e y - z b x ) & \hspace{10mm}
4) &  z^2 b x + a^2 j y - z b^2 x & \hspace{10mm}
5,20,32,34,40) & a( z b x + e j y - b j x ) \\
10,17)& b^2 e x - z a^2 x -  b e^2 y & \hspace{10mm}
21,42) & a^2 b x + e^2 j y - e j^2 x \\
\end{array}
$
\end{center}

\begin{lemma}
Suppose the relations in $\cP_3$ hold in $\cR$.
Whenever $T$ and $T'$ are fine tangles differing by a Reidemeister move of
type 3, their evaluations in $\Rsq$ are equal.  A similar result holds
for partial states that agree, each leaving unassigned the central
region of the move.
\end{lemma}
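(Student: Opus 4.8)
The plan is to imitate the proof of the preceding lemma, exploiting the crucial feature that a move of type 3, unlike one of type 2, is purely local and creates or destroys no faces. First I would observe that the faces meeting the region of the move correspond bijectively before and after: the central triangular face persists, and the surrounding faces are unchanged in number, topological type, and adjacencies. Consequently any consistent, non-forbidden assignment of the symbols black and white to these faces determines a single state on each side of the move, and grouping each state $\sigma$ of the left-hand diagram with the state $\sigma'$ of the right-hand diagram that agrees with it on every face sets up a bijection between the two state sums.

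Next I would note that, because the faces and their colours are identical for $\sigma$ and $\sigma'$, the facial contributions $x^{w}$ and $y^{b}$, together with the square-root factors attached to boundary disks, coincide and so factor out as a common term. The only quantities that can differ are the products $\xi_\sigma$ and $\xi_{\sigma'}$ of the three $4j$-variables read at the three crossings, under the convention of Fig.~\ref{fig:vars} (overpass from northeast to southwest, symbols read anticlockwise starting from the east). Hence the contribution of each group to the difference of the two evaluations is this common facial factor times $(\xi_\sigma - \xi_{\sigma'})$.

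It then remains to enumerate these differences. Running over all consistent, non-forbidden colourings of the faces in the region, and reading off the three variables from $\{z,a,b,e,j\}$ at each crossing on either side, produces exactly the list of pairs displayed in Fig.~\ref{fig:18eqsR3uinv}; the symmetries $d=a$, $h=b$ together with the crossing identity $x_{klij}=x_{ijkl}$ collapse the eighteen cases to the five expressions comprising $\cP_3$, with the remaining cases $0,9,18,36$ giving $\xi_\sigma-\xi_{\sigma'}=0$ outright. Each nonzero difference is therefore, up to sign and a facial factor, one of the polynomials in $\cP_3$, which vanishes in $\cR$ by hypothesis; summing over groups yields equality of the evaluations. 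For the partial-state assertion one argues identically, summing only over states that extend the given preassignment and ignoring the preassigned faces when collecting the common factor; since the central region is left unassigned, the bijection of groups and the cancellation are unaffected.

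I expect the main obstacle to be the careful bookkeeping in the enumeration step rather than any conceptual difficulty: one must correctly identify, for each colouring and at each of the three crossings, which of the five $4j$-variables occurs under the reading convention, and then verify that the symmetry reductions really do yield precisely the five relations of $\cP_3$ with no colouring overlooked. Once that table is confirmed complete, the remainder is the same grouping-and-cancellation mechanism already used for $\cP_2$.
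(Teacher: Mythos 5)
There is a genuine gap, and it sits at the heart of your argument: the claim that under a Reidemeister move of type 3 ``the central triangular face persists'' with unchanged adjacencies, so that states on the two sides correspond bijectively face-by-face. This is false. The move destroys the central triangle and creates a new one whose adjacencies to the six surrounding sectors are different (it touches the three alternate sectors it did not touch before). Consequently a colouring that is admissible on one side may be forbidden on the other, there is no bijection between individual states, and the colour of the central triangle must be summed over \emph{independently} on each side. The correct grouping, mirroring the paper's proof of the preceding lemma for $\cP_2$, is by the restriction of states to the faces \emph{outside} the central region; this is exactly why the lemma's partial-state clause insists that the central region be left unassigned, and why Fig.~\ref{fig:18eqsR3uinv} has $18$ cases --- the admissible black/white colourings of the six-cycle of outer sectors --- rather than one case per full state.

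Your error is visible in the relations themselves. You assert that the facial contributions factor out and that each relation is, up to sign and a facial factor, a difference $\xi_\sigma - \xi_{\sigma'}$ of two monomials in the crossing variables. But the members of $\cP_3$, e.g.\ $z^2 x + b e y - z b x$ and $z^2 b x + a^2 j y - z b^2 x$, are trinomials mixing $x$ and $y$: the two positive terms are the left diagram's sum over the central triangle being white (contributing $x$) or black (contributing $y$), while the single negative term reflects that on the right only the white central triangle is admissible for that outer colouring. No expression of the form (facial factor)$\cdot(\xi_\sigma - \xi_{\sigma'})$ can produce these. To repair the proof, fix an admissible colouring of the six outer sectors, form on each side the sum over the colour of its own central triangle (with its facial variable $x$ or $y$, the triangle being an internal disk), and show these differences are exactly the polynomials of $\cP_3$ up to sign, with cases $0, 9, 18, 36$ vanishing identically; invariance of full (or partial) evaluations then follows by the same grouping-and-cancellation mechanism you describe, applied to groups rather than to single states.
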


The factors of $a$ appearing in some relations of $\cP_3$ can be cancelled
using the second relation in $\cP_2$, which implies that $a$ is invertible.
The ideal generated by $\cP_2 \cup \cP_3$ in the polynomial ring
over $\Z$ with the variables as free generators was analysed using Singular.
It is the intersection of two prime ideals, each giving the same polynomial
invariant up to changes of sign in some variables. 
The value 1 is assigned to $x$, as it will shown near the end of this article
that little information is lost thereby.
By choosing the case where $e$ satisfies $1+e+e^2+e^3+e^4 = 0$, $e$ can be
regarded as a complex primitive fifth root of unity, henceforth called $u$.
From this, it is not
hard to obtain formulas that express the other variables in terms of $u$. 
Thus the invariant (for links rather than tangles in general) can be regarded
as taking values in the subring $\Z[u]$ of $\C$.
It is defined by assigning the following values to the variables.

\begin{definition}
$ 
z \to -u^2-u^3,\ 
a \to -u^3,\ 
b=h\to -u^2,\ 
e\to u,\ 
j\to u^4,\
x\to 1,\ 
y\to u^2+u^3.
$
\label{def:uassignement}
\end{definition}

It is easy to verify by hand that the $u$-assignment,
where $1+u+u^2+u^3+u^4 = 0$, annihilates the 
polynomials in $\cP_2 \cup \cP_3$.   Invariance under the ribbon move
is obvious, so we have now defined an invariant $T \to [T]_u$ of
framed tangles in oriented surfaces, henceforth called the $u$-invariant.
One checks easily that, under a framing change of $+1$, values multiply by $u$.  
Invariance under all Reidemeister moves of type 1 could then be arranged
in the usual way via the self-writhe $sw(T)$ of a tangle diagram $T$
(the sum of the writhes of all curves in $T$).
Thus gives an invariant $u^{-sw(T)}[T]_u$ of unframed tangles that we
prefer not to name, as it suffices to apply the $u$-invariant to
$0$-framed diagrams of tangles.

All values lie in an extension of $\Z[u]$ by a square root of $u^2 + u^3$, but
as our focus is on closed tangles (links $L$) we work only with $\Z[u]$. 
Since $1=-(u+u^2+u^3+u^4)$, each
value can be expressed uniquely in the
form $a u +b u^2 + c u^3 + d u^4$, where $a,b,c,d$ are integers.
We write this as $\lfloor a,b,c,d \rfloor$.  
Its complex conjugate is $\lfloor d,c,b,a \rfloor.$
Mirror images are taken with respect to the surface, altering all crossings.
In both the framed and unframed cases, the corresponding invariants are
complex conjugates of each other, as can be seen from the way
values in $\Z[u]$ were assigned to the variables.  Thus:

\begin{proposition}
If links $L$ and $L^\star$ are mirror images
and $[L]_u = \lfloor a,b,c,d \rfloor$, then
$[L^\star]_u = \lfloor d,c,b,a \rfloor$.
\end{proposition}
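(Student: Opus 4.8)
The plan is to trace how taking a mirror image of a link $L$ affects the combinatorial data entering the state sum, and then to track the induced effect on the $u$-values assigned by Definition~\ref{def:uassignement}. First I would fix the meaning of mirror image in this setting: as stated, mirrors are taken with respect to the surface, so every crossing is switched, exchanging which strand is the overpass. The key observation is that switching a crossing, while keeping the same cyclic labelling of the four surrounding faces, permutes the roles of the $4j$-variables. Concretely, under the convention where one reads the four dots anticlockwise from the east with the overpass running northeast--southwest, switching the overpass to northwest--southeast is the same as relabelling the faces by a quarter-turn. So I would first verify, directly from Fig.~\ref{fig:vars}, that crossing-switching sends each variable in $\{z,a,b,e,j\}$ to a definite other variable (using the symmetries $d=a$, $h=b$ already noted), while the facial variables $x,y$ are untouched because the set of white and black faces is unchanged.

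Next I would compute the effect of the $u$-assignment under complex conjugation. Since $u$ is a primitive fifth root of unity, $\bar u = u^4$, $\overline{u^2}=u^3$, and $\overline{u^3}=u^2$. Applying this to the values in Definition~\ref{def:uassignement} gives $\bar z = -u^3-u^2 = z$, $\bar a = -u^2$, $\bar b = -u^3$, $\bar e = u^4$, $\bar j = u$, and $\bar y = u^3+u^2 = y$, while $x=1$ is real. The heart of the argument is the claim that the permutation of the $4j$-variables induced by crossing-switching coincides exactly with the permutation induced by complex conjugation of their $u$-values. Reading off the relations above, conjugation swaps $a\leftrightarrow b$ (since $\bar a=-u^2=b$ and $\bar b=-u^3=a$) and swaps $e\leftrightarrow j$ (since $\bar e=u^4=j$ and $\bar j=u=e$), while fixing $z$, $x$, $y$. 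I would check that switching a crossing produces precisely this same pairing $a\leftrightarrow b$, $e\leftrightarrow j$, with $z$ fixed, which is exactly what one expects since $z$ is the all-white configuration and $\{a,b\}$, $\{e,j\}$ are the configurations interchanged by the quarter-turn symmetry.

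Granting that these two permutations agree, the proof concludes cleanly: the state sum for $L^\star$ is obtained term by term from that for $L$ by replacing each variable by its complex conjugate, and since the facial contributions $x^{w}y^{b}$ are fixed under conjugation, each monomial $\xi_\sigma$ is sent to $\overline{\xi_\sigma}$. Because conjugation is a ring homomorphism on $\Z[u]$, summing gives $[L^\star]_u = \overline{[L]_u}$. The final step is purely notational: if $[L]_u = \lfloor a,b,c,d\rfloor = au+bu^2+cu^3+du^4$, then conjugating and using $\bar u=u^4$, $\overline{u^2}=u^3$, $\overline{u^3}=u^2$, $\overline{u^4}=u$ rearranges the coefficients to $du+cu^2+bu^3+au^4 = \lfloor d,c,b,a\rfloor$, as already recorded in the text for the conjugate. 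I expect the main obstacle to be the bookkeeping in the first step: one must be careful that switching a crossing and relabelling faces by a rotation really do induce the \emph{same} permutation of variables rather than its inverse or some other symmetry, and that this matches conjugation and not, say, the mirror permutation composed with an extra swap. Once the dictionary ``crossing-switch $=$ conjugation of $u$-values'' is nailed down against Fig.~\ref{fig:vars}, everything else is formal.
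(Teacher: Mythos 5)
Your proposal is correct and follows essentially the same route as the paper: the paper's (very terse) justification is precisely that mirror imaging corresponds to complex conjugation of the $u$-values assigned to the variables, combined with the earlier observation that the conjugate of $\lfloor a,b,c,d\rfloor$ is $\lfloor d,c,b,a\rfloor$. Your write-up simply fills in the details the paper leaves implicit, and your dictionary (crossing-switch fixes $z,x,y$ and swaps $a\leftrightarrow b$, $e\leftrightarrow j$, matching conjugation in $\Z[u]$) checks out against the quarter-turn relabelling of faces in Fig.~\ref{fig:vars}.
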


\begin{figure}[!ht]
\begin{center}
\includegraphics[width=4cm]{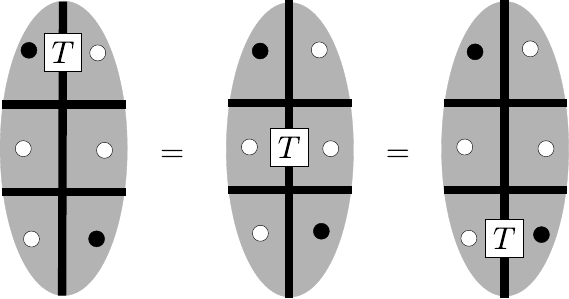}
\caption{\sf A planar 1-tangle passes through crossings with no change in value.}
\label{fig:1tanglepassingcrossingtwice}
\end{center}
\end{figure}
Diagrams are now being used to stand for their evaluations by the original
(framed) $u$-invariant.
Recall our convention in the evaluation of partial states that dots 
already assigned to faces do not contribute facial factors in the state sums.
A result based on moving planar tangles through crossings,
proved in general in Theorem~\ref{theo:doubleinvariance},
and illustrated here as it applies to the $u$-invariant, is:
  
\begin{theorem}
\label{theo:udoubleinvariance}
Let $T$ be a long link in the plane, shown here as an infinite 1-tangle.
Then
\begin{center}
\includegraphics[height=1.3cm]{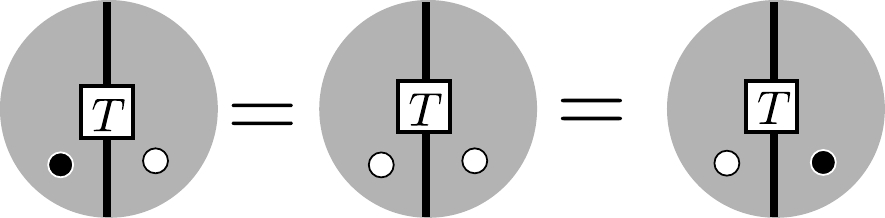}
\raisebox{0.5cm}{ \ and \ $\ [T]_u = \ $}
{\includegraphics[height=1.3cm]{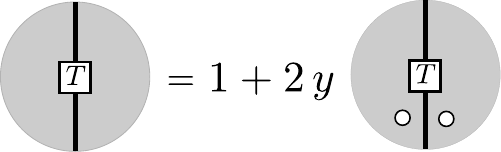}}
\raisebox{0.5cm}{$ \ \in \sqrt{5}\,\Z[u].$}
\end{center}
\end{theorem}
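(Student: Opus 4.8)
The plan is to establish the two assertions of Theorem~\ref{theo:udoubleinvariance} separately, treating first the invariance of $[T]_u$ when the long link $T$ is slid through a crossing (the ``bullet moves''), and then the explicit computation of the closed-off value, showing it lands in $\sqrt{5}\,\Z[u]$. For the first part I would exploit the composition of admissible moves sketched around Fig.~\ref{fig:tanglepassingcrossing}: since $T$ is a \emph{planar} $1$-tangle confined to a topological disk, it can be pushed through a crossing by a sequence of admissible Reidemeister moves of types~$1'$, $2$ (restricted) and~$3$. Because the $u$-assignment annihilates every polynomial in $\cP_2 \cup \cP_3$ (as verified by hand after Definition~\ref{def:uassignement}), and invariance under the ribbon move is immediate, each move in the sequence preserves the evaluation. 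The one subtlety is that the move takes place with a tangle $T$ inserted into one of the faces, so I must argue that inserting $T$ does not disturb the applicability of the relations. Here the convention for partial states is crucial: the faces occupied by $T$ are treated as carrying preassigned data that factor out uniformly from both sides of each relation, exactly as in the ``partial state'' clauses of the two invariance lemmas above. The evaluation of $T$ itself is a fixed matrix (by the composition theorem, Theorem~1), and sliding it through the crossing corresponds to conjugating or multiplying by crossing data that the relations force to agree.

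For the second assertion I would compute the right-hand closed diagram directly. After capping off the long link as indicated, one obtains a genuine link in the plane (or $S^2$), so by the remark following Theorem~1 its evaluation is a $1\times 1$ matrix lying in $\cR$, with no surviving square roots. I would enumerate the admissible (non-forbidden) states of this small diagram, assign to each the product of the relevant $1j$- and $4j$-variables under the $u$-assignment of Definition~\ref{def:uassignement}, and sum. Using $x \to 1$ and the expressions $z \to -u^2-u^3$, $a \to -u^3$, $b \to -u^2$, $e \to u$, $j \to u^4$, $y \to u^2+u^3$, each state contributes a monomial in $\Z[u]$, and the total simplifies via $1+u+u^2+u^3+u^4 = 0$. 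The claim is that the result is an element of the ideal $\sqrt{5}\,\Z[u]$, i.e.\ a $\Z[u]$-multiple of $\sqrt 5$. I would identify $\sqrt 5$ inside $\Z[u]$ using the standard Gauss-sum identity for the primitive fifth root of unity, namely that $u-u^2-u^3+u^4 = \sqrt 5$ (up to sign), so membership in $\sqrt 5\,\Z[u]$ becomes a concrete congruence condition on the integer coordinates $\lfloor a,b,c,d\rfloor$ of the computed value.

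I expect the main obstacle to be the first part rather than the computation. Verifying that the planar $1$-tangle can be carried through the crossing \emph{purely by admissible moves}, while keeping every intermediate diagram fine (or at least well-placed) and tracking the partial-state conventions, is where the non-local character of the type-$2$ move could cause trouble. In particular I must ensure that each move used actually changes the face count by two and does not disconnect the diagram, as required for the restricted move; the fact that $T$ lies in a disk is exactly what guarantees this, but it needs to be invoked carefully. The computational second part, by contrast, is a finite check once the state enumeration is organized, and the appearance of the factor $\sqrt 5$ should follow from the algebraic structure of $\Z[u]$ together with the symmetry $d=a$, $h=b$ already noted in the $4j$-variables.
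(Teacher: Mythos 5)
Your first part follows the paper's own route: the equality of the dotted diagrams is precisely the specialization to the $u$-assignment of Theorem~\ref{theo:doubleinvariance}, which the paper proves by passing the planar $1$-tangle through a crossing via admissible moves (Fig.~\ref{fig:tanglepassingcrossing}) and then cancelling an invertible crossing variable (here $a$ and $b$, which are $\pm$ powers of $u$, so invertible). Your attention to the partial-state conventions is appropriate, although the final cancellation step is only gestured at (``crossing data that the relations force to agree'').

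The second part has a genuine gap. You treat the membership $[T]_u \in \sqrt{5}\,\Z[u]$ as ``a finite check once the state enumeration is organized'', but $T$ is an \emph{arbitrary} long link: the closed diagram is not a small fixed diagram, there is no finite list of states to enumerate, and no direct computation can cover all $T$. The missing idea --- which is the entire content of the paper's proof --- is that the second assertion \emph{follows from the first}, rather than being an independent calculation. Group the terms of the state sum for $[T]_u$ according to the symbols on the two unbounded faces. These faces are adjacent, so the only admissible assignments are (black, white), (white, white), (white, black), contributing facial factors $y$, $1$, $y$ respectively (recall $x=1$), each multiplied by the corresponding partial-state value. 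By the first assertion those three partial-state values coincide, with common value $\{T\}_u \in \Z[u]$, whence $[T]_u = (1+2y)\,\{T\}_u$. Finally $(1+2y)^2 = 5$; indeed $1+2y = -(u-u^2-u^3+u^4)$, which is the negative of the Gauss sum you quote, so $1+2y$ is a square root of $5$ inside $\Z[u]$ and $[T]_u \in \sqrt{5}\,\Z[u]$. Your proposal never makes this connection between the two assertions (you even expect the first part to be the hard one and the second to be routine), so for general $T$ the inclusion in $\sqrt{5}\,\Z[u]$ would remain unproved.
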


\begin{proof}
The leftmost diagrams have the same value, which also
appears in the state sum for $[T]_u$ with
factors of  $y$, 1, $y$, where $y = u^2+ u^3$.
One checks that $(1+2y)^2 = 5$.
\end{proof}

We write $\{T\}_u$ to denote the leftmost values, so
$\{T\}_u = [T]_u/(1+2y)$.  The new invariant
assigns 1 to the trivial unknot and is clearly multiplicative under
connected sums, so is the preferred form for recording tables of
values for links in the plane, usually adjusted to have self-writhe 0.
The previous result has the following interesting reformulation.

\begin{corollary}
\label{lem:bulletmoves}
For links in the plane or in $S^2$ the value of the $u$-invariant
on a partial state having exactly one dot, coloured black, does not depend on
which face contains the dot.
\end{corollary}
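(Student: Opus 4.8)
The plan is to prove the stronger local statement that sliding the single black dot from a face into a neighbouring face leaves the value unchanged, and then to combine this with an elementary connectivity argument so that every face is reachable from every other. Write $D_f$ for the value of the partial state whose only dot is a black dot in the face $f$. By the standing convention the facial factor of $f$ itself is omitted from the state sum, while all other faces contribute as usual and the sum ranges over those states of $L$ in which $f$ is black, subject to the forbidden rule that no two adjacent faces are both black. The goal is to show $D_f=D_{f'}$ for any two faces $f,f'$.

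First I would reduce to the case of two faces sharing an edge. Given $f$ and $f'$, choose interior points $p\in f$ and $p'\in f'$ and a path from $p$ to $p'$ that meets $L$ transversally and only in the interiors of edges. Such a path passes through a finite sequence of faces in which consecutive members are separated by a single edge, so it is enough to prove $D_f=D_{f'}$ whenever $f$ and $f'$ abut a common edge $E$. This argument is identical in the plane and in $S^2$ and requires no connectedness hypothesis on $L$, since any two faces are joined by some transversal path.

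Next I would identify this elementary step with Theorem~\ref{theo:udoubleinvariance}. Deleting an interior point of the common edge $E$ opens the diagram into an infinite $1$-tangle, that is, a long link in the plane, and the two resulting unbounded faces are precisely $f$ and $f'$. Placing the black dot in $f$ and placing it in $f'$ are then the two extreme configurations related by the bullet moves of Theorem~\ref{theo:udoubleinvariance}, which shows they have equal value; transported back to the closed diagram this yields $D_f=D_{f'}$. Feeding this single-edge equality into the transversal path of the previous paragraph gives $D_f=D_{f'}$ for arbitrary $f$ and $f'$, which is the assertion.

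The main obstacle is the bookkeeping in the middle step rather than any deep computation. One must check that the partial-state convention, in which the dotted face contributes no facial factor, corresponds exactly to the long-link evaluation underlying Theorem~\ref{theo:udoubleinvariance}, so that the theorem genuinely applies to $D_f$ and $D_{f'}$; and one must confirm that the bullet moves suffice to carry the marked region across $E$ even when $E$ abuts crossings, where passage through a crossing in either the over or the under sense is needed to realize the isotopy of the opened strand. Once the conventions are matched the equality of values is immediate from the theorem, and the only further input is the elementary transversal-path observation.
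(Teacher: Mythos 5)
Your proposal is correct and follows essentially the same route as the paper: reduce to the case of two faces sharing an edge, open the diagram into a long link by deleting an interior point of that edge so the two faces become the unbounded faces, and invoke Theorem~\ref{theo:udoubleinvariance}. You merely make explicit the transversal-path connectivity step and the edge-deletion step that the paper's one-line proof leaves implicit.
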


\begin{proof}
It suffices to compare two such values where the relevant faces are
neighbours, for convenience drawn as the infinite faces of a long link,
and treated just above.
\end{proof}

This does not extend to surfaces of positive genus.
The following example of a tangle in a torus has three faces and exactly two
non-zero states with a black dot.  These have different values.
\begin{figure}[!ht]
\begin{center}
\raisebox{-0.2cm}{\includegraphics[height=2.0cm]{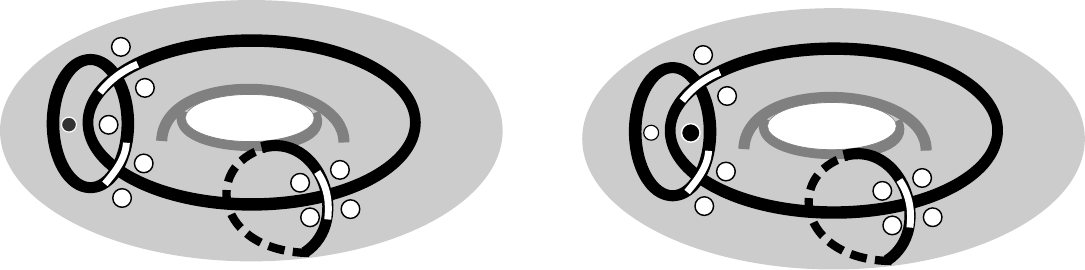}}
\caption{\sf Example in the torus showing that the corollary does not generalize.}
\label{fig:exampleintorus}
\end{center}
\end{figure}

\section{Further properties of the general state-sum invariants}

In what follows, we assume that the variables, regarded as elements of
a ring $\cR$ or $\Rsq$, satisfy relations analogous to those studied above.
Thus they give an invariant of fine tangles and even, by preparation,
an invariant $[T]$ of tangles $T$.

To avoid trivialities, for each symbol $i$ assume there is some $j$ such that
$\{i,j\}$ not forbidden.  If this failed for some $i$, that symbol would be
almost useless, as its only possible contribution could be for
tangles disjoint from some component of the surface they lie in.
We can now verify earlier claims about facial variables $x_i$.
Note that products of these were often cancelled from relations, but
here they should be left in place to justify that practice.

\begin{theorem}
\label{theo:invert}
Under the mild assumption just above,
all facial variables $x_i$ are invertible in $\cR$.
There are formulas $x_i^{-1} =  \sum_k x_kx_{kjij}x_{jijk}$, for suitable $j$.
\end{theorem}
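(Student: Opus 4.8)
The plan is to derive the stated formula from invariance under a single Reidemeister move of type~2, namely the digon move that produced the relations $\cP_2$ in the $u$-invariant example. First I would use the hypothesis to fix, once and for all, a symbol $j$ with $\{i,j\}$ not forbidden; this is the ``suitable $j$'' of the statement, and any such $j$ will serve. I would then set up the local picture of a digon created by an R2 move, positioned in the interior of a fine tangle so that every face involved is an internal disk (this keeps the computation in $\cR$ rather than $\Rsq$, with no square roots to track). The digon is to be arranged so that, reading the two crossings in the prescribed way (over-crossing running north-east to south-west, faces anticlockwise from the east), the face whose invertibility is in question carries the symbol $i$, the two faces that are merged away by the move (the upper and lower faces that are to be ignored) both carry the symbol $j$, and the symbol of the digon itself is left free.

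Next I would evaluate both sides of the move. On the digon side, grouping states by the symbol $k$ assigned to the digon, the two crossings contribute $x_{kjij}$ and $x_{jijk}$ and the digon face contributes $x_k$, while the face $i$ contributes its facial variable $x_i$; summing over $k$ gives the evaluation $x_i\sum_k x_k\,x_{kjij}\,x_{jijk}$. Terms in which some adjacency becomes forbidden drop out automatically, since the associated $4j$-variable has been set to zero, so the sum is well defined over all $k$. On the other side the digon has been removed, and with the convention that the merged face is not recounted the evaluation is simply $1$. Invariance then forces
$$x_i\sum_k x_k\,x_{kjij}\,x_{jijk}=1,$$
which is precisely the assertion that $x_i$ is invertible with the stated inverse. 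As a check, in the two-symbol case this reproduces the relation $ejx^2-1$ of $\cP_2$ when $i$ is white and $j$ is black, and $abxy+ejy^2-1$ when $i$ is black and $j$ is white.

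The step I expect to be the main obstacle is the bookkeeping that pins the two evaluations down to exactly $x_i\sum_k(\cdots)$ and $1$. One must verify that, once the convention of ignoring the upper and lower faces is applied, precisely one factor of $x_i$ survives (not $x_i^2$) and that the post-move side genuinely collapses to $1$; this is where the annulus-versus-disk distinction and the use of fineness enter, exactly as in the discussion following $\cP_2$. A secondary point worth recording is why the hypothesis is needed at all: were every pair $\{i,j\}$ forbidden, the face $i$ could never be legitimately adjacent to the $j$-faces of the configuration, every term of the relation would be annihilated, and the identity would degenerate to $0=1$ rather than exhibiting an inverse. The hypothesis guarantees a genuine non-forbidden configuration, and hence a genuine inverse; the several admissible choices of $j$ account for the different equivalent formulas one obtains for $x_i^{-1}$.
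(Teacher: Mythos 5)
Your proposal has the correct formula and correct consistency checks against $\cP_2$, but it begs the question at precisely the step you yourself flag as ``the main obstacle,'' and that step is the entire content of the theorem. By placing the digon in the interior so that every face involved is an internal disk, the only invariance statement you are actually entitled to is an equality of honest evaluations: each of the two faces that fuse with the digon contributes a factor $x_i$, the merged face on the other side contributes one factor $x_i$, and all remaining faces contribute equal factors $F$ to both sides. Invariance therefore gives $F\,x_i^2\,p_{ij} = F\,x_i$, where $p_{ij}=\sum_k x_kx_{kjij}x_{jijk}$, and extracting $x_ip_{ij}=1$ from this requires cancelling $F\,x_i$ --- that is, it presupposes that facial variables are not divisors of zero, which is what is being proved. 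The ``convention'' of not recounting faces is not available for free: in the paper it is justified either by configurations in which one of the fusing faces is an annulus (impossible in your all-internal-disk setup) or by this very theorem. The paper escapes the circle differently: it uses the 2E (eight) move at the boundary (Fig.~\ref{fig:eigthmovewithlabel}), where the only extraneous factors are $\sqrt{x_i}\sqrt{x_j}$, obtains $\sqrt{x_i}\sqrt{x_j}=\sqrt{x_i}\sqrt{x_j}\,x_ip_{ij}$, notes that the left side is nonzero because $\{i,j\}$ is not forbidden, and then invokes Assumption~\ref{ass:ring} to conclude that $1-x_ip_{ij}$ is a divisor of zero and hence nilpotent, so that $x_ip_{ij}$ is invertible, $\sqrt{x_i}$ and $\sqrt{x_j}$ are not nilpotent, hence not divisors of zero, and may be cancelled to give $x_ip_{ij}=1$. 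Your argument never invokes Assumption~\ref{ass:ring} or the nonvanishing of non-forbidden configurations, and it offers no substitute for them; without these ingredients no cancellation of facial factors can be justified.

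There is also a labelling error that makes your setup internally inconsistent. In the variables $x_{kjij}$ and $x_{jijk}$ the digon symbol $k$ sits opposite $i$, and the two $j$'s sit opposite each other; around each crossing of a digon, the face opposite the digon is exactly the face that fuses with it. So the two faces merged away by the move must carry $i$ --- the symbol whose variable is being inverted --- while both side faces carry $j$. You assign $j$ to the merged-away faces and $i$ to a single ``face in question'' (leaving the fourth local face unspecified); with that assignment the two crossings do not contribute $x_{kjij}$ and $x_{jijk}$, and the computation would produce a formula inverting the facial variable of the merged faces, not $x_i$. Your own check against $\cP_2$ confirms the correct assignment: in $ejx^2-1$ the symbol $i$ (white) is carried by the faces that fuse with the digon, and $j$ (black) by the two side faces.
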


\begin{figure}[!ht]
\begin{center}
\includegraphics[width=1.8cm]{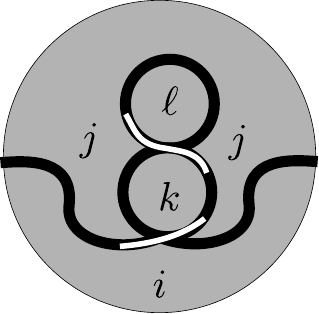} \\
\caption{\sf Eight diagram with state labels.}
\label{fig:eigthmovewithlabel}
\end{center}
\end{figure}

\begin{proof}
Consider state diagrams for the non-local Reidemeister move of type 2,
like those in Fig.~\ref{fig:7eqsR2uinv}, especially the diagram pair marked 2,
but with general states.   Whenever certain boundary regions in the
leftmost diagram have different symbols, giving an inconsistent substate on
the right, the value of the leftmost diagram, given by a state sum, is zero
in $\Rsq$.  Thus, in the following diagram, if $l$ is fixed, as well as $i$ and $j$,
and a sum is taken over $k$, the value obtained is zero unless $l=i$.
Invariance under a 2E (eight) move implies a sum over $k$ and $l$ which
yields a polynomial $p_{ij}$ satisfying
$$\ \sqrt{x_i}\sqrt{x_j} = \sqrt{x_i}\sqrt{x_j}x_ip_{ij}, \text{\ where \ }
p_{ij} = \sum_k x_kx_{kjij}x_{jijk}.$$
The value on the left is not zero, as $\{i,j\}$ is not forbidden.  Then
$1-x_ip_{ij}$ is a divisor of zero, which in turn implies that $\sqrt{x_i}$ is not
nilpotent.  By symmetry, nor is $\sqrt{x_j}$.  By a standing assumption
about the ring, both can be cancelled from the equation.
Thus $x_i^{-1}$ has a polynomial expression.
\end{proof}

Next we return to an alternate method that assigns values directly to states
of well-placed tangles $T$.  Extending the original valuation method, the
contribution of a face $f$ with symbol $i$ is defined to be $x_i^{\chi_f}$,
or this divided by $\sqrt{x_i}$ when $\paf$ contains part but not all of some
boundary component of $S$.

\begin{theorem}
\label{theo:newvalue}
The value assigned directly to a well-placed tangle $T$ is 
the same as that originally obtained only after preparing $T$
to obtain a fine tangle.
\end{theorem}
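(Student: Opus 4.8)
The plan is to reduce the statement to a single local computation by exploiting the fact, recorded in \cite{joli2012A}, that the passage from a well-placed tangle to a fine one can be realised as a finite sequence of the local moves of Fig.~\ref{fig:eigthfingermoves}. Since the direct valuation already agrees with the simple one on fine tangles -- an internal disk has $\chi_f=1$, giving $x_i^{\chi_f}=x_i$; a boundary disk is corrected to $x_i/\sqrt{x_i}=\sqrt{x_i}$; and an annulus has $\chi_f=0$, giving $1$ -- it suffices to show that the direct value is unchanged by each refining move. First I would dispose of the finger-rotation moves, which merely rearrange a fixed collection of faces without altering their number or their Euler characteristics: for these, invariance of the direct value is literally the content of the arguments that $\cP_2$ and $\cP_3$ force invariance under moves of types $2$ and $3$.

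The heart of the proof is the eight move of Fig.~\ref{fig:eigthmovewithlabel}, the only refining operation that changes the topology of a face. Pushing a finger into a face $f$ carrying symbol $i$ cuts $f$ along a properly embedded arc, which raises the total Euler characteristic of the $i$-coloured region by exactly $1$ (cutting any surface along such an arc increases $\chi$ by $1$, whether the arc separates or not), and it simultaneously creates one new internal disk together with two new crossings. In the direct valuation the cut region therefore contributes $x_i^{\chi_f+1}$, while summing the state sum over the symbol $k$ of the new disk produces the factor $\sum_k x_k\,x_{kjij}\,x_{jijk}=p_{ij}$, where $j$ is the symbol of the region from which the finger is pushed. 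By Theorem~\ref{theo:invert} this factor equals $x_i^{-1}$, so the two effects cancel and the region contributes $x_i^{\chi_f+1}x_i^{-1}=x_i^{\chi_f}$, exactly as before. Hence every eight move preserves the direct value, and iterating the moves reduces all faces to disks and annuli without changing that value; combined with the agreement of the two formulas on the resulting fine tangle, this gives the theorem.

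The step I expect to cost the most care is the bookkeeping for faces meeting $\paS$, where the correction factor $\sqrt{x_i}$ intervenes. The hard part will be to verify that a refining arc splits the connected set $\paf\cap\paS$ in such a way that the number of resulting boundary-disk pieces (each carrying $\sqrt{x_i}$ rather than a full $x_i$ or an annular $1$) is precisely the one predicted by the Euler-characteristic formula together with its $\sqrt{x_i}$ correction. I would organise this by treating separately the cases in which the refining arc has both endpoints on $T$, has one endpoint on $\paS$, or cuts off a collar of a boundary component, checking in each that the half-integer powers of $x_i$ match up. The additivity of $\chi$ and the standing well-placedness hypothesis -- which keeps $\paf\cap\paS$ connected at every intermediate stage -- should make these verifications routine once the eight-move cancellation above is established.
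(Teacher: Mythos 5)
Your proposal founders on its first step. The reduction you attribute to \cite{joli2012A} --- that the passage from a well-placed tangle to a fine one can be realised by a sequence of the local moves of Fig.~\ref{fig:eigthfingermoves} --- is not what that reference provides: its Theorem~2.2 substitutes those local moves for admissible type~2 moves \emph{between fine diagrams}. The preparation process necessarily passes through well-placed diagrams that are not fine (that is its whole point), and the paper is explicit that ``to justify results for the wider class of well-placed tangles, it will be necessary to examine invariance under more general moves of type~2.'' Accordingly, the paper's own proof analyses a general type~2 move between well-placed tangles directly, run backwards: the digon and the two external faces fuse into one face. Moreover, your substitute moves cannot even in principle do the refining work: an eight move merely cuts a disk off one adjacent face and carves a digon out of the other, leaving each face homeomorphic to what it was, so no sequence of eight moves and finger rotations ever reduces the genus or the number of boundary components of a bad face. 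Your claim that the eight move is ``the only refining operation that changes the topology of a face'' is exactly backwards; the genuine refining operations are the general finger moves that your scheme omits.

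This mis-identification propagates into your key computation. In a genuine refining move the finger, pushed from a face with symbol $j$, crosses a \emph{different} strand and pokes into a face whose symbol $l$ need not equal $j$; the two new crossings are then adjacent to faces carrying symbols $i,j,k,l$, so the digon sum is $\sum_k x_k$ times two $4j$-variables involving $l$ --- not your $p_{ij}=\sum_k x_kx_{kjij}x_{jijk}$, which is the eight-move special case $l=j$. The identity actually needed, that this more general digon sum also equals $x_i^{-1}$, does hold, but it must be extracted from the full set of type-2 relations for fine tangles together with invertibility of the facial variables (in the paper's words, the difference of the two sides ``is a multiple, up to powers of facial variables, of the difference from a move between tame tangles''), not from Theorem~\ref{theo:invert} alone. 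On the positive side, your observation that cutting a face along a properly embedded arc raises $\chi$ by $1$ whether or not the arc separates is a genuinely nice unification: it covers in one stroke the paper's case split (two globally distinct faces merging; one face whose two strands lie in the same component of $\paf$, which splits; or in different components, which coalesce and add a handle). If you replace your first step by a direct analysis of general type-2 moves between well-placed tangles, keep that Euler-characteristic argument, and replace $p_{ij}$ by the general digon-sum identity, you recover essentially the paper's proof.
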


\begin{proof}
It suffices to show that the new method gives values invariant under the
preparation process, which involves a sequence of Reidemeister moves of
type 2 between well-placed tangles, ending with a fine tangle.  Looking at
any such move made backwards, we focus throughout on the two external faces
that will fuse to the digon and create a new face.  The other faces present
in the move play a minor role, even if they are not distinct from the faces
under consideration, and will be ignored.

First assume that the two faces under study are distinct,
as will hold for example if they contain different symbols.
Globally, the difference in values of diagrams before and after the move is
a multiple of an analogous difference calculated using only parts involved
in the move.  This difference is in turn a multiple, up to powers of facial
variables, of the difference from a move between tame tangles.  Thus, as a
consequence of the relations for tame tangles, this difference has value 0.
Assuming both faces have the same symbol $i$, with contributions $x_i^{f_1}$
and $x_i^{f_2}$, the contribution of the merged face is then
$x_i^{f_1}x_i^{f_2}/x_i,$ and the invariance result is equivalent to the
polynomial formula for $x_i^{-1}$ in Theorem~\ref{theo:invert}.

Now suppose the two faces are, globally, the same face $f$, of genus $g_f$
and with $c_f$ boundary components.   Consider the
two strands of $\paf$ involved in the move.  
There are two cases, illustrated roughly in the diagrams.
In the second, parts of both sides of $f$ are visible.
\begin{itemize}
\item[(1)]  The strands lie in the same component of $\paf$,
which then splits into two, with no change in the genus of the face. 
\item[(2)] The strands lie in different components of $\paf$, which coalesce.
This creates a new handle with a hole, increasing the genus of the face by 1.
\end{itemize}
In both cases the Euler characteristic $2-2g_f-c_f$ of the face decreases by 1,
giving invariance via the polynomial formula for $x_i^{-1}$.
\end{proof}

\begin{figure}[!ht]
\begin{center}
\includegraphics[width=8cm]{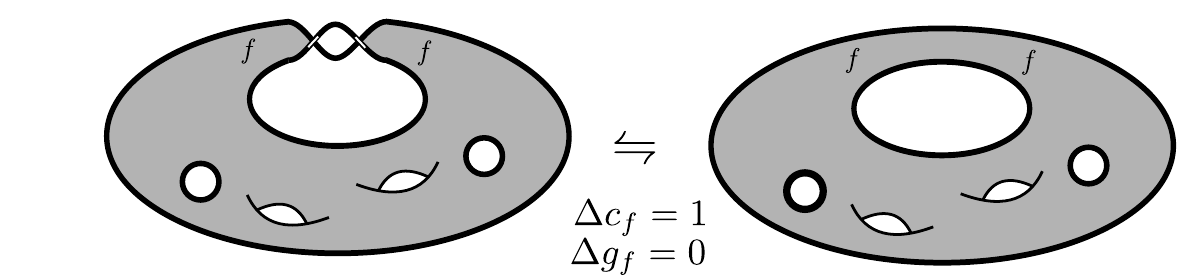} \hspace{10mm}
\includegraphics[width=8cm]{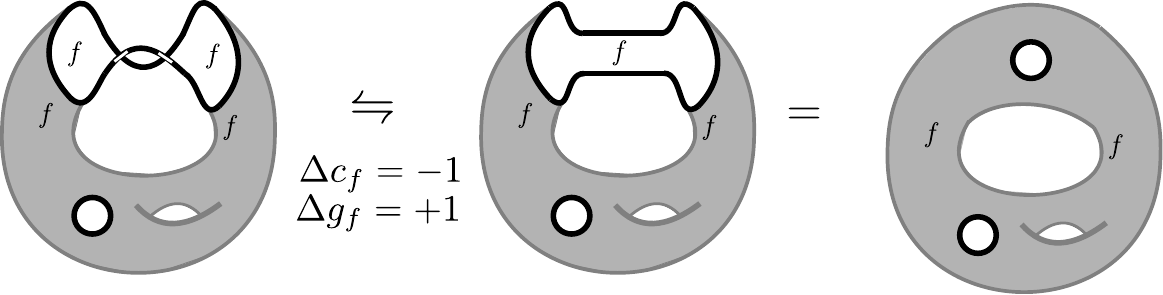} \\
\caption{\sf Cases where two faces that fuse coincide globally.}
\label{fig:samefacecase}
\end{center}
\end{figure}

Some aspects related to moving tangles (especially curls) through crossings
will now be discussed.  The next result generalizes part of
Theorem~\ref{theo:udoubleinvariance} and was used in its proof.

\begin{theorem}
\label{theo:doubleinvariance}
Let $T$ be a long knot or link in the plane, with specified infinite faces
$f$ and $g$, and let $i,j,k,l$ be among the symbols.
Let $[T]_{ij}$ denote the value of the substate which assigns $i$ to $f$
and $j$ to $g$, leaving the other faces unassigned. 
Then $[T]_{ij} = [T]_{kl}$, provided at least one of the $4j$ (crossing)
variables $x_{iklj}$ or $x_{jikl}$ is invertible.
\end{theorem}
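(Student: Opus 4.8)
The plan is to realize the relabeling $(i,j)\mapsto(k,l)$ by a single passage of the planar tangle through a crossing, and then to read the resulting identity off the state sum. Since $T$ is a long link in the plane, its compact part lies in a topological disk, so by the discussion around Fig.~\ref{fig:tanglepassingcrossing} it can be slid through a crossing $c$ of the long strand by a sequence of admissible Reidemeister moves of types 2 and 3. I would choose $c$ so that its four incident faces are, in anticlockwise order from the east, exactly $f,g$ together with the two faces that become unbounded after the passage. Tracking the arc that carries the puncture at infinity: when it is the understrand at $c$ the four faces read $i,k,l,j$, so $c$ carries the variable $x_{iklj}$; when it is the overstrand they read $j,i,k,l$, giving $x_{jikl}$. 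This is exactly the ``crossing of either type'' contemplated in Section~2. Sliding the puncture along the strand past $c$ moves it from between $f,g$ to between the other two faces, thereby converting the substate that fixes $f,g$ to $i,j$ into the substate that fixes the new unbounded faces to $k,l$.

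With the move localized at $c$, I would compare the two refined evaluations directly. I would first push everything away from the faces $f,g$ and the crossing $c$ by admissible moves that leave those faces untouched; by the invariance lemmas for moves of types 2 and 3 these preserve $[T]_{ij}$ (and the analogous value for the other opening), so the problem reduces to a local identity at the single crossing $c$. Expanding the state sum around $c$ with the unbounded faces held fixed, the contribution of $c$ is its $4j$-variable while the two auxiliary faces become internal disks over whose symbols one sums; grouping states exactly as in the arguments for $\cP_2$ and for Theorem~\ref{theo:invert}, the invariance supplied by the admissible passage yields an identity in which this single $4j$-variable occurs as a common factor, schematically
$$ x_{iklj}\,[T]_{ij} \;=\; x_{iklj}\,[T]_{kl} $$
(and with $x_{jikl}$ in the overstrand case). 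Cancelling the invertible factor $x_{iklj}$ (respectively $x_{jikl}$) then gives $[T]_{ij}=[T]_{kl}$; this is precisely the kind of cancellation of a non-divisor-of-zero used, via Assumption~\ref{ass:ring}, in the proof of Theorem~\ref{theo:invert}, and the two hypotheses are handled at once, one crossing type per variable.

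The hard part is establishing that local identity with \emph{exactly} one $4j$-variable factoring cleanly. The delicate bookkeeping is the facial-factor convention: as the puncture crosses $c$, the faces $f,g$ lose their status as preassigned unbounded faces and so acquire factors $x^{\chi}$, while the two new unbounded faces lose theirs, and I must verify that these exchanges of Euler-characteristic exponents are absorbed exactly, just as in Theorem~\ref{theo:newvalue}, leaving only the crossing variable behind. I would also have to treat the degenerate configurations in which an auxiliary face coincides globally with $f$ or $g$, or fails to be a disk (an annulus, or a face of higher genus); here planarity of $T$ keeps the local picture controlled, and the same Euler-characteristic reasoning used for Theorem~\ref{theo:newvalue} disposes of the exceptional faces. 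Finally I would record that when neither $x_{iklj}$ nor $x_{jikl}$ is invertible the relabeling can genuinely alter the value, as the torus example following Corollary~\ref{lem:bulletmoves} already signals in the non-planar setting, so the invertibility hypothesis cannot simply be dropped.
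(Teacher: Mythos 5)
Your high-level idea is the right one, and it is the paper's: slide the planar tangle through a crossing by admissible moves, factor each partial-state evaluation as a single $4j$-variable times $[T]_{\cdot\cdot}$, and cancel that variable, the two crossing types (long strand over or under) accounting for the two variables $x_{iklj}$ and $x_{jikl}$ in the hypothesis. But two steps of your execution contain genuine gaps. First, the crossing must be \emph{auxiliary}: one introduces a new strand crossing the long strand of $T$ once, and preassigns symbols $i,j,k,l$ to all four faces around that crossing, which are just the pieces into which the new strand cuts $f$ and $g$. You instead propose to ``choose'' a crossing $c$ of the long strand whose four incident faces are $f,g$ and two others --- i.e.\ a pre-existing crossing of $T$. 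Such a crossing need not exist (a long unknot has none), and its other two faces are faces of $T$ over which the state sum ranges, so they are not available to carry the arbitrary symbols $k,l$ of the statement. Relatedly, your move is ``sliding the puncture at infinity past $c$''. That operation changes which faces of the diagram are unbounded; it is not a composition of admissible Reidemeister moves, and at best it would relate $[T]_{ij}$ to a substate of a \emph{different} long-link diagram on \emph{different} faces, whereas the theorem asserts $[T]_{ij}=[T]_{kl}$ for the same faces $f,g$ of the same diagram $T$. Indeed, independence of the choice of face (Corollary~\ref{lem:bulletmoves}) is \emph{deduced from} this theorem, so treating puncture moves as value-preserving is close to assuming what is to be proved. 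The move actually available (Section~2, Fig.~\ref{fig:tanglepassingcrossing}) slides the compact blob, not the puncture.

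Second, even granting the passage, your factoring step does not follow from your own setup: you say the two faces at $c$ other than $f,g$ ``become internal disks over whose symbols one sums''. If those faces are summed over, invariance under the passage yields only an identity of the shape $[T]_{ij}\sum_{k',l'}x_{ik'l'j}\,\phi_{k'l'}=\sum_{k',l'}[T]_{k'l'}\,x_{ik'l'j}\,\phi_{k'l'}$ (with $\phi_{k'l'}$ the facial factors), from which neither a common $4j$-factor nor the particular pair $(k,l)$ can be extracted. The clean argument preassigns \emph{all four} faces at the auxiliary crossing as a partial state: by the paper's convention, preassigned faces contribute no facial factors, so each side of the invariance identity is literally the fixed variable $x_{iklj}$ (or $x_{jikl}$) times the sum over the internal faces of $T$, namely $x_{iklj}[T]_{ij}$ before and $x_{iklj}[T]_{kl}$ after the passage; cancellation of the invertible factor finishes the proof. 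This also dissolves what you call the hard part: no Euler-characteristic bookkeeping in the style of Theorem~\ref{theo:newvalue} is needed, because no face changes its preassigned status during the passage. (A minor point: an invertible element cancels outright, so Assumption~\ref{ass:ring} plays no role in this step, in contrast with the proof of Theorem~\ref{theo:invert}, where cancellation of a possibly non-invertible $\sqrt{x_i}$ is the issue.)
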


\begin{proof}
The diagrams of Fig.~\ref{fig:tanglepassingcrossing} have the same value.
One then cancels a crossing variable.
\end{proof}
 
For arbitrary symbols $i$ and $j$, let $c_{ij}$
denote the factor in $\cR$ calculated from the positive curl in
the next diagram, and let $c'_{ij}$ denote the factor for the positive curl
(not shown) obtained from it by applying a ribbon move.
Explicitly, $c_{ij} = \sum_k x_k.x_{jiki}$, while $c'_{ij}=c_{ji}$ since we
do not orient tangles. 
Invariance under the ribbon move, for diagrams with symbols assigned to the
boundary faces, is thus the assertion that $c_{ij} = c_{ji}$ always holds.
Excluding forbidden pairs $\{i,j\}$ (for which $c_{ij}=0$),
$c_{ij}$ has an inverse in $\cR$, calculable from a negative curl.
This is clear from the Whitney trick.

Theorem~\ref{theo:doubleinvariance} will give certain relations of the form
$c_{ij} = c_{kl}$.
By composing such relations, always avoiding forbidden states, one expects to
see that invariants of regular isotopy produced by these constructions
will often need no further specialization in order to give
invariants of framed tangles.  Certainly this holds
for the $u$-invariant, where the factor for positive curls is $u$.  
In general, however, invariants need not have a single factor measuring
changes of writhe, and examples could well be of interest.

\begin{figure}[!ht]
\begin{center}
\includegraphics[width=4.5cm]{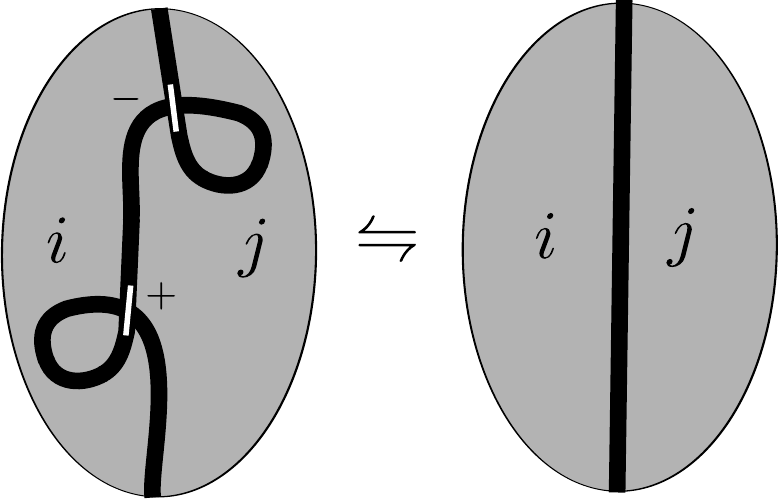}
\caption{\sf The Whitney trick (realizable by admissible Reidemeister moves).}
\label{fig:whitneytrickstate}
\end{center}
\end{figure}

Finally we show, for links in closed surfaces $S$, how little an invariant is affected
by adding a relation $x_0 = 1$, where $0$ denotes some fixed symbol.  Nothing is
lost by taking $\cR$ to be the usual polynomial ring modulo relations.
Instead of adding the above relation, one can introduce new variables, shown
with primes, that satisfy relations $x_i'x_0 = x_i$ (so $x_0'= 1$) and
$x'_{ijkl} = x_{ijkl}x_0$.
All earlier relations for moves between fine tangles can be rewritten in
terms of the new variables and take exactly the same form, as the $x_0$
always cancels completely.  In the case of admissible moves of type 2, as
shown in Fig.~\ref{fig:unrestricted2}, this is because one diagram has
exactly two faces and two vertices more than the other. 

One obtains a new invariant of links on $S$ by giving the primed variables
values in the obvious subring $\cR'$ of $\cR$, and can then identify $\cR$
with the ring of Laurent polynomials $\cR'[x_0,x_0^{-1}]$.
Now observe that whenever the primed invariant assigns value $r'\in \cR'$
to a link $T$ in $S$, the corresponding unprimed invariant has value
$x_0^n.r'$, for some integer $n$.  
 From the definitions of the primed variables, the exponent of $x_0$
is the number of faces of $T$ minus the number of vertices.  But this
is just the Euler characteristic $\chi_S$ of $S$, as every vertex
of $T$ has degree 4.  Nothing more than that can be lost by
assuming $x_0 = 1$.

\section{Appendix A: Values of the $u$-invariant for knots up to
10 crossings}

We record the values of the $u$-invariant, in its normalized form $\{K\}_u$
where the unknot has value 1, for all prime knots $K$ with at most 10
crossings, choosing one in each pair of mirror images and adjusting the
framing to be 0.  Each value can be represented by a vector of four integers
giving the coefficients of $u$, $u^2$, $u^3$ and $u^4$, which reverses
under taking the mirror image.
Here we present this data with the vectors given in condensed form as 4-letter
words, using A,$\dots$,Z for $1,\dots,26$, and a,$\dots$,z for
$-1,\dots,-26$, while 0 is 0.  In addition, write $\alpha$ for -27,
$\beta$ for -30, $\gamma$ for -33.

 From the table, one sees for example that the $u$-invariant distinguishes
$8_{20}$ from its mirror image.  By \cite{tanaka2009} these two knots, as well
as the connected sum of a trefoil with its mirror image, are not distinguished by
knot Floer homology.  On the other hand, the $u$-invariant fails to distinguish
pairs such as $5_1$ and its mirror image, and does not even detect the
unknottedness of $8_{19}$, $9_1$ or $10_{152}$.

{\small
\subsection{Up to 7 crossing knots}
\begin{center}
$
\begin{array}{|lr|lr|lr|lr|lr|lr|lr|}
3_{1} & \text{0aAA} \hspace{0.01mm} &4_{1} & \text{0bb0} \hspace{0.01mm} &5_{1} & \text{ABBA} \hspace{0.01mm} &5_{2} & \text{Aabb} \hspace{0.01mm} &6_{1} & \text{abdb} \hspace{0.01mm} &6_{2} & \text{BBab} \hspace{0.01mm} &6_{3} & \text{beeb} \hspace{0.01mm} \\
7_{1} & \text{AAa0} \hspace{0.01mm} &7_{2} & \text{abdb} \hspace{0.01mm} &7_{3} & \text{BECA} \hspace{0.01mm} &7_{4} & \text{aded} \hspace{0.01mm} &7_{5} & \text{DFEA} \hspace{0.01mm} &7_{6} & \text{DBbc} \hspace{0.01mm} &7_{7} & \text{chfb} \hspace{0.01mm} \\
\end{array}
$
\end{center}

\subsection{8 crossing knots}
\begin{center}
$
\begin{array}{|lr|lr|lr|lr|lr|lr|lr|}
8_{1} & \text{Aabb} \hspace{-0.05mm} &8_{2} & \text{ACEB} \hspace{-0.05mm} &8_{3} & \text{beeb} \hspace{-0.05mm} &8_{4} & \text{DEB0} \hspace{-0.05mm} &8_{5} & \text{AEFD} \hspace{-0.05mm} &8_{6} & \text{eeaB} \hspace{-0.05mm} &8_{7} & \text{CCad} \hspace{-0.05mm} \\
8_{8} & \text{ehea} \hspace{-0.05mm} &8_{9} & \text{dhhd} \hspace{-0.05mm} &8_{10} & \text{EEac} \hspace{-0.05mm} &8_{11} & \text{DAed} \hspace{-0.05mm} &8_{12} & \text{ciic} \hspace{-0.05mm} &8_{13} & \text{dif0} \hspace{-0.05mm} &8_{14} & \text{Daff} \hspace{-0.05mm} \\
8_{15} & \text{FJFa} \hspace{-0.05mm} &8_{16} & \text{eaFF} \hspace{-0.05mm} &8_{17} & \text{elle} \hspace{-0.05mm} &8_{18} & \text{goog} \hspace{-0.05mm} &8_{19} & \text{aaaa} \hspace{-0.05mm} &8_{20} & \text{Aabb} \hspace{-0.05mm} &8_{21} & \text{Abec} \hspace{-0.05mm} \\
\end{array}
$
\end{center}

\subsection{9 crossing knots}
\begin{center}
$
\begin{array}{|lr|lr|lr|lr|lr|lr|lr|}
9_{1} & \text{aaaa} \hspace{-0.1mm} &9_{2} & \text{0bb0} \hspace{-0.1mm} &9_{3} & \text{baBB} \hspace{-0.1mm} &9_{4} & \text{DEB0} \hspace{-0.1mm} &9_{5} & \text{beeb} \hspace{-0.1mm} &9_{6} & \text{daCC} \hspace{-0.1mm} &9_{7} & \text{EFBb} \hspace{-0.1mm} \\
9_{8} & \text{EFBb} \hspace{-0.1mm} &9_{9} & \text{caEE} \hspace{-0.1mm} &9_{10} & \text{GIE0} \hspace{-0.1mm} &9_{11} & \text{BGIC} \hspace{-0.1mm} &9_{12} & \text{Bbhf} \hspace{-0.1mm} &9_{13} & \text{FICb} \hspace{-0.1mm} &9_{14} & \text{Aeif} \hspace{-0.1mm} \\
9_{15} & \text{Bdih} \hspace{-0.1mm} &9_{16} & \text{eaFF} \hspace{-0.1mm} &9_{17} & \text{dBGF} \hspace{-0.1mm} &9_{18} & \text{HJEb} \hspace{-0.1mm} &9_{19} & \text{bilf} \hspace{-0.1mm} &9_{20} & \text{DJLE} \hspace{-0.1mm} &9_{21} & \text{Dbig} \hspace{-0.1mm} \\
9_{22} & \text{cBIH} \hspace{-0.1mm} &9_{23} & \text{GJCd} \hspace{-0.1mm} &9_{24} & \text{fmlc} \hspace{-0.1mm} &9_{25} & \text{ileB} \hspace{-0.1mm} &9_{26} & \text{FCeh} \hspace{-0.1mm} &9_{27} & \text{dloh} \hspace{-0.1mm} &9_{28} & \text{geEH} \hspace{-0.1mm} \\
9_{29} & \text{ILCc} \hspace{-0.1mm} &9_{30} & \text{cmpg} \hspace{-0.1mm} &9_{31} & \text{HCfi} \hspace{-0.1mm} &9_{32} & \text{IFei} \hspace{-0.1mm} &9_{33} & \text{ispe} \hspace{-0.1mm} &9_{34} & \text{eqvj} \hspace{-0.1mm} &9_{35} & \text{dfgb} \hspace{-0.1mm} \\
9_{36} & \text{BIJE} \hspace{-0.1mm} &9_{37} & \text{ajme} \hspace{-0.1mm} &9_{38} & \text{JNFd} \hspace{-0.1mm} &9_{39} & \text{Demj} \hspace{-0.1mm} &9_{40} & \text{lfGL} \hspace{-0.1mm} &9_{41} & \text{ileB} \hspace{-0.1mm} &9_{42} & \text{ABBA} \hspace{-0.1mm} \\
9_{43} & \text{aBCB} \hspace{-0.1mm} &9_{44} & \text{cebA} \hspace{-0.1mm} &9_{45} & \text{ehea} \hspace{-0.1mm} &9_{46} & \text{ca00} \hspace{-0.1mm} &9_{47} & \text{DF0d} \hspace{-0.1mm} &9_{48} & \text{FBcd} \hspace{-0.1mm} &9_{49} & \text{AFIE} \hspace{-0.1mm} \\\end{array}
$
\end{center}

\subsection{10 crossing knots}
\begin{center}
$
\begin{array}{|lr|lr|lr|lr|lr|lr|}
10_{1} & \text{AAa0} \hspace{-0.1mm} &10_{2} & \text{bbaA} \hspace{-0.1mm} &10_{3} & \text{aded} \hspace{-0.1mm} &10_{4} & \text{BECA} \hspace{-0.1mm} &10_{5} & \text{AEFD} \hspace{-0.1mm} &10_{6} & \text{BGIC} \hspace{-0.1mm} \\
10_{7} & \text{Aeif} \hspace{-0.1mm} &10_{8} & \text{0BED} \hspace{-0.1mm} &10_{9} & \text{EEac} \hspace{-0.1mm} &10_{10} & \text{fhbB} \hspace{-0.1mm} &10_{11} & \text{GEbd} \hspace{-0.1mm} &10_{12} & \text{Cahh} \hspace{-0.1mm} \\
10_{13} & \text{0img} \hspace{-0.1mm} &10_{14} & \text{FNMC} \hspace{-0.1mm} &10_{15} & \text{0EIG} \hspace{-0.1mm} &10_{16} & \text{FFae} \hspace{-0.1mm} &10_{17} & \text{ciic} \hspace{-0.1mm} &10_{18} & \text{HFbf} \hspace{-0.1mm} \\
10_{19} & \text{bEJH} \hspace{-0.1mm} &10_{20} & \text{ehea} \hspace{-0.1mm} &10_{21} & \text{CJJC} \hspace{-0.1mm} &10_{22} & \text{ahlh} \hspace{-0.1mm} &10_{23} & \text{FBhi} \hspace{-0.1mm} &10_{24} & \text{imhA} \hspace{-0.1mm} \\
10_{25} & \text{HQPE} \hspace{-0.1mm} &10_{26} & \text{iplb} \hspace{-0.1mm} &10_{27} & \text{Fall} \hspace{-0.1mm} &10_{28} & \text{gibD} \hspace{-0.1mm} &10_{29} & \text{IGbh} \hspace{-0.1mm} &10_{30} & \text{Aipj} \hspace{-0.1mm} \\
10_{31} & \text{fome} \hspace{-0.1mm} &10_{32} & \text{jql0} \hspace{-0.1mm} &10_{33} & \text{fppf} \hspace{-0.1mm} &10_{34} & \text{eeaB} \hspace{-0.1mm} &10_{35} & \text{ahlh} \hspace{-0.1mm} &10_{36} & \text{ahlh} \hspace{-0.1mm} \\
10_{37} & \text{dmmd} \hspace{-0.1mm} &10_{38} & \text{ioia} \hspace{-0.1mm} &10_{39} & \text{FPNE} \hspace{-0.1mm} &10_{40} & \text{HAlk} \hspace{-0.1mm} &10_{41} & \text{JJah} \hspace{-0.1mm} &10_{42} & \text{htvi} \hspace{-0.1mm} \\
10_{43} & \text{hssh} \hspace{-0.1mm} &10_{44} & \text{LJbi} \hspace{-0.1mm} &10_{45} & \text{iwwi} \hspace{-0.1mm} &10_{46} & \text{cebA} \hspace{-0.1mm} &10_{47} & \text{AFIE} \hspace{-0.1mm} &10_{48} & \text{elle} \hspace{-0.1mm} \\
10_{49} & \text{eBJI} \hspace{-0.1mm} &10_{50} & \text{EMME} \hspace{-0.1mm} &10_{51} & \text{HBij} \hspace{-0.1mm} &10_{52} & \text{bFMI} \hspace{-0.1mm} &10_{53} & \text{LMBf} \hspace{-0.1mm} &10_{54} & \text{FJFa} \hspace{-0.1mm} \\
10_{55} & \text{IIaf} \hspace{-0.1mm} &10_{56} & \text{EPQH} \hspace{-0.1mm} &10_{57} & \text{Hamm} \hspace{-0.1mm} &10_{58} & \text{Aipj} \hspace{-0.1mm} &10_{59} & \text{LLag} \hspace{-0.1mm} &10_{60} & \text{mwpb} \hspace{-0.1mm} \\
10_{61} & \text{CE0b} \hspace{-0.1mm} &10_{62} & \text{AHJG} \hspace{-0.1mm} &10_{63} & \text{JIAd} \hspace{-0.1mm} &10_{64} & \text{HHad} \hspace{-0.1mm} &10_{65} & \text{F0ik} \hspace{-0.1mm} &10_{66} & \text{gCNK} \hspace{-0.1mm} \\
10_{67} & \text{akpk} \hspace{-0.1mm} &10_{68} & \text{Cbki} \hspace{-0.1mm} &10_{69} & \text{I0mo} \hspace{-0.1mm} &10_{70} & \text{gbIK} \hspace{-0.1mm} &10_{71} & \text{gttg} \hspace{-0.1mm} &10_{72} & \text{ITQE} \hspace{-0.1mm} \\
10_{73} & \text{IBlm} \hspace{-0.1mm} &10_{74} & \text{Chnj} \hspace{-0.1mm} &10_{75} & \text{nvoc} \hspace{-0.1mm} &10_{76} & \text{DMPH} \hspace{-0.1mm} &10_{77} & \text{Eblj} \hspace{-0.1mm} &10_{78} & \text{FQQF} \hspace{-0.1mm} \\
10_{79} & \text{fppf} \hspace{-0.1mm} &10_{80} & \text{fBML} \hspace{-0.1mm} &10_{81} & \text{iwwi} \hspace{-0.1mm} &10_{82} & \text{IIaf} \hspace{-0.1mm} &10_{83} & \text{mlBI} \hspace{-0.1mm} &10_{84} & \text{npbH} \hspace{-0.1mm} \\
10_{85} & \text{HMIA} \hspace{-0.1mm} &10_{86} & \text{mwpb} \hspace{-0.1mm} &10_{87} & \text{Altm} \hspace{-0.1mm} &10_{88} & \text{j$\alpha\alpha$j} \hspace{-0.1mm} &10_{89} & \text{LCmp} \hspace{-0.1mm} &10_{90} & \text{ktm0} \hspace{-0.1mm} \\
10_{91} & \text{hssh} \hspace{-0.1mm} &10_{92} & \text{KXWH} \hspace{-0.1mm} &10_{93} & \text{0IPK} \hspace{-0.1mm} &10_{94} & \text{JJah} \hspace{-0.1mm} &10_{95} & \text{KBmn} \hspace{-0.1mm} &10_{96} & \text{bqzn} \hspace{-0.1mm} \\
10_{97} & \text{aowo} \hspace{-0.1mm} &10_{98} & \text{HVUJ} \hspace{-0.1mm} &10_{99} & \text{jvvj} \hspace{-0.1mm} &10_{100} & \text{AJPI} \hspace{-0.1mm} &10_{101} & \text{MMai} \hspace{-0.1mm} &10_{102} & \text{alsl} \hspace{-0.1mm} \\
10_{103} & \text{HAlk} \hspace{-0.1mm} &10_{104} & \text{gttg} \hspace{-0.1mm} &10_{105} & \text{OMbj} \hspace{-0.1mm} &10_{106} & \text{LLag} \hspace{-0.1mm} &10_{107} & \text{jzxi} \hspace{-0.1mm} &10_{108} & \text{aINI} \hspace{-0.1mm} \\
10_{109} & \text{iwwi} \hspace{-0.1mm} &10_{110} & \text{MMai} \hspace{-0.1mm} &10_{111} & \text{HTTH} \hspace{-0.1mm} &10_{112} & \text{MMai} \hspace{-0.1mm} &10_{113} & \text{Kbtr} \hspace{-0.1mm} &10_{114} & \text{0pxn} \hspace{-0.1mm} \\
10_{115} & \text{l$\beta\beta$l} \hspace{-0.1mm} &10_{116} & \text{NNak} \hspace{-0.1mm} &10_{117} & \text{qqaK} \hspace{-0.1mm} &10_{118} & \text{kzzk} \hspace{-0.1mm} &10_{119} & \text{o$\alpha$q0} \hspace{-0.1mm} &10_{120} & \text{QRBk} \hspace{-0.1mm} \\
10_{121} & \text{rqBN} \hspace{-0.1mm} &10_{122} & \text{q$\alpha$pA} \hspace{-0.1mm} &10_{123} & \text{n$\gamma\gamma$n} \hspace{-0.1mm} &10_{124} & \text{ABBA} \hspace{-0.1mm} &10_{125} & \text{ABBA} \hspace{-0.1mm} &10_{126} & \text{cebA} \hspace{-0.1mm} \\
10_{127} & \text{EIFA} \hspace{-0.1mm} &10_{128} & \text{Aabb} \hspace{-0.1mm} &10_{129} & \text{ehea} \hspace{-0.1mm} &10_{130} & \text{BBab} \hspace{-0.1mm} &10_{131} & \text{bhie} \hspace{-0.1mm} &10_{132} & \text{ABBA} \hspace{-0.1mm} \\
10_{133} & \text{beeb} \hspace{-0.1mm} &10_{134} & \text{DBbc} \hspace{-0.1mm} &10_{135} & \text{bilf} \hspace{-0.1mm} &10_{136} & \text{ACEB} \hspace{-0.1mm} &10_{137} & \text{Baee} \hspace{-0.1mm} &10_{138} & \text{bCIF} \hspace{-0.1mm} \\
10_{139} & \text{0CC0} \hspace{-0.1mm} &10_{140} & \text{0B0a} \hspace{-0.1mm} &10_{141} & \text{ccaC} \hspace{-0.1mm} &10_{142} & \text{CAba} \hspace{-0.1mm} &10_{143} & \text{dhcA} \hspace{-0.1mm} &10_{144} & \text{hf0E} \hspace{-0.1mm} \\
10_{145} & \text{CBA0} \hspace{-0.1mm} &10_{146} & \text{afkf} \hspace{-0.1mm} &10_{147} & \text{EHC0} \hspace{-0.1mm} &10_{148} & \text{fieA} \hspace{-0.1mm} &10_{149} & \text{HMIA} \hspace{-0.1mm} &10_{150} & \text{EIFA} \hspace{-0.1mm} \\
10_{151} & \text{HFbf} \hspace{-0.1mm} &10_{152} & \text{aaaa} \hspace{-0.1mm} &10_{153} & \text{BBab} \hspace{-0.1mm} &10_{154} & \text{cebA} \hspace{-0.1mm} &10_{155} & \text{Baee} \hspace{-0.1mm} &10_{156} & \text{eaFF} \hspace{-0.1mm} \\
10_{157} & \text{IPJA} \hspace{-0.1mm} &10_{158} & \text{fome} \hspace{-0.1mm} &10_{159} & \text{glfA} \hspace{-0.1mm} &10_{160} & \text{aCFC} \hspace{-0.1mm} &10_{161} & \text{Aabb} \hspace{-0.1mm} &10_{162} & \text{EBef} \hspace{-0.1mm} \\
10_{163} & \text{IGbh} \hspace{-0.1mm} &10_{164} & \text{gojb} \hspace{-0.1mm} &10_{165} & \text{dklg} \hspace{-0.1mm}\\
\end{array}
$
\end{center}
}

\section{Appendix B:  Images of the $u$-invariant of knots up to 10 crossings}
We plot the complex numbers $\{ K \}_u$
for all prime knots $K$ up to 10 crossings evaluated at
$u=e^{\frac{2\,\pi\, i}{5}}$ and $u=e^{4\,\frac{\,\pi\, i}{5}}$, or equivalently
at the conjugates $u=e^{\frac{8\,\pi\, i}{5}}$ and $u=e^{\frac{6\,\pi\, i}{5}}$.
Since all mirror pairs of knots are included, these graphics are symmetric
relative to the real axis.  Moreover we include values for all writhes of
the knots involved, thus giving the image a dihedral 10-fold symmetry.
A curious unexpected fact is the great difference in scale corresponding
to the two values of $u$. 
When non-prime knots are included images become a little denser,
from new products of points in the original graphic.
Other interesting graphics arise by taking
certain products and quotients of the above invariants.

\begin{figure}[!ht]
\begin{center}
\includegraphics[width=100mm]{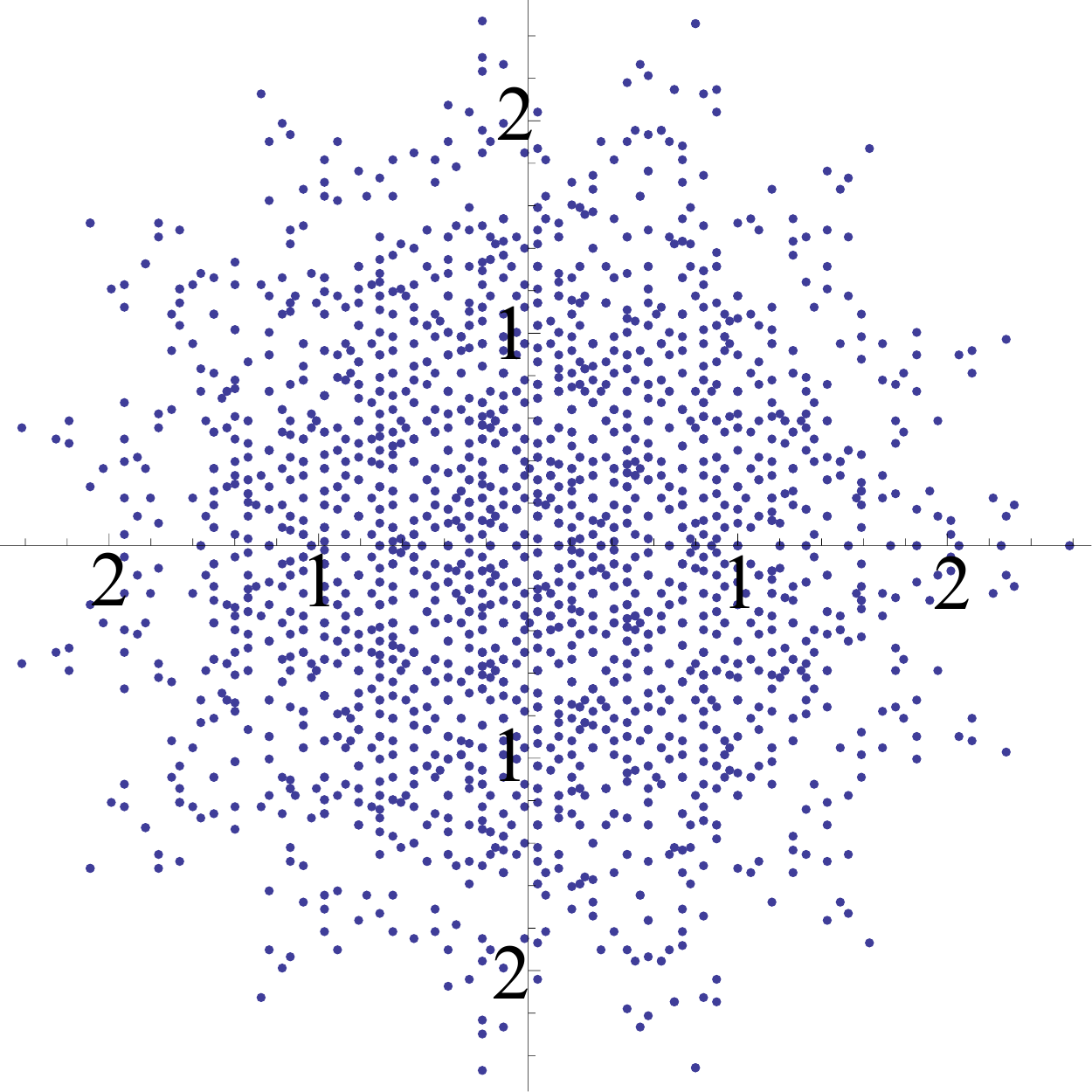} \\
\caption{\sf {\bf The image of the $u$-invariant of knots up to 10 crossings,
computed at $u=e^{\frac{2\,\pi\, i}{5}}.$}}
\label{fig:plot2}
\end{center}
\end{figure}

\begin{figure}[ht]
\begin{center}
\includegraphics[width=100mm]{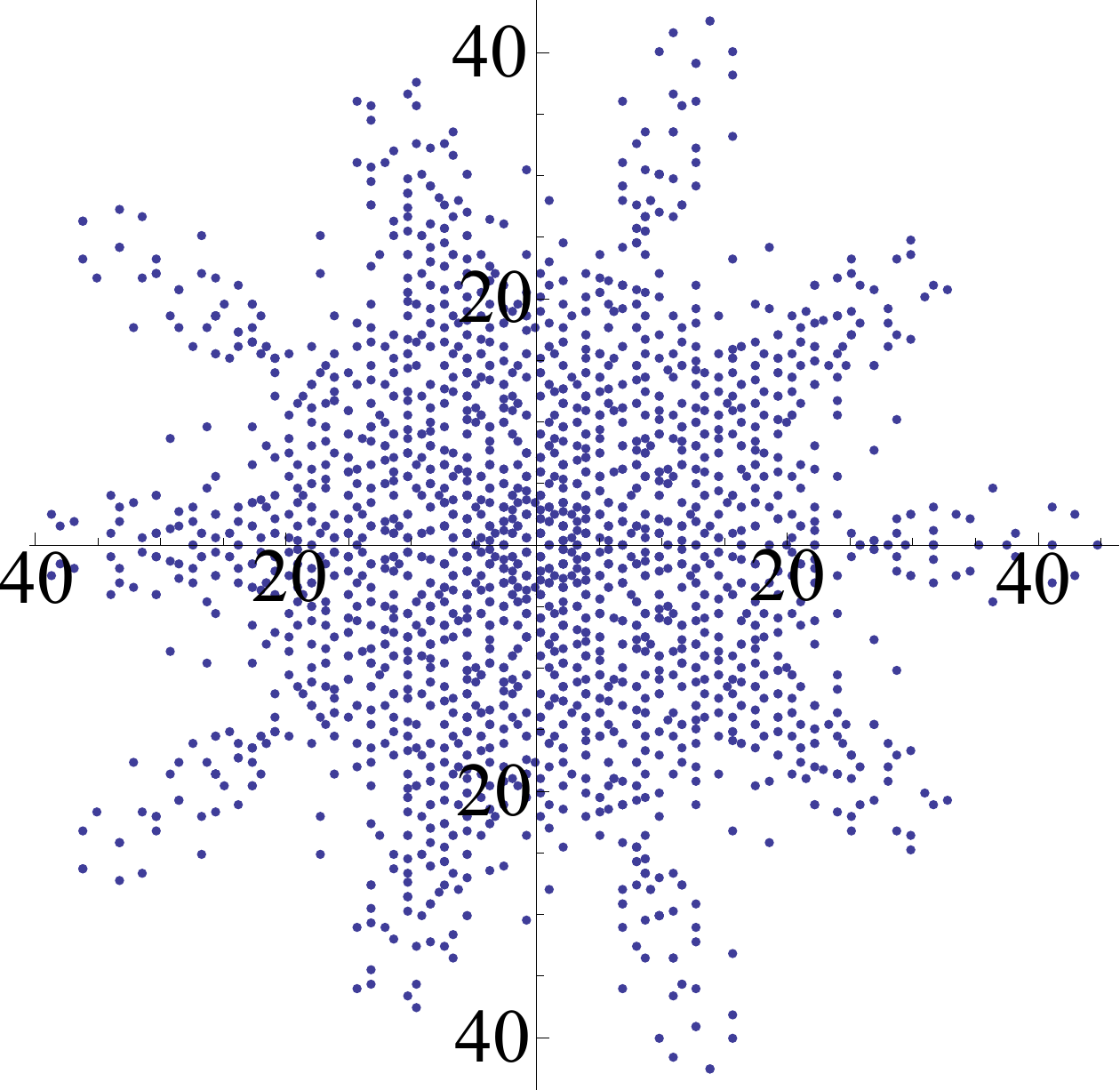} \\
\caption{\sf {\bf The image of the $u$-invariant of knots up to 10 crossings,
computed at $u=e^{\frac{4\,\pi\, i}{5}}.$}}
\label{fig:plot3}
\end{center}
\end{figure}

\bibliographystyle{plain}
\bibliography{bibtexIndex.bib}

\vspace{10mm}
\begin{center}

\begin{tabular}{l}
   Peter M. Johnson\\
   Departamento de Matem\'atica, UFPE\\
   Recife--PE \\
   Brazil\\
   peterj@dmat.ufpe.br
\end{tabular}
\hspace{7mm}
\begin{tabular}{l}
   S\'ostenes Lins\\
   Centro de Inform\'atica, UFPE \\
   Recife--PE \\
   Brazil\\
   sostenes@cin.ufpe.br
\end{tabular}
\end{center}

\end{document}